\newtheorem{theorem}{Theorem}
\newtheorem{lemma}{Lemma}
\newtheorem{proposition}{Proposition}
\newtheorem{remark}{Remark}
\newenvironment{proof}[1][Proof]{\textbf{#1.} }{\ \rule{0.5em}{0.5em}}
\begin{document}

\date{}
\title{On the construction of $m$-step methods for FDEs}
\author{Lidia Aceto\thanks{%
Dipartimento di Matematica, Universit\`{a} di Pisa, Italy, lidia.aceto@unipi.it}, \and %
Cecilia Magherini\thanks{%
Dipartimento di Matematica, Universit\`{a} di Pisa, Italy, cecilia.magherini@unipi.it},
\and Paolo Novati \thanks{%
Dipartimento di Matematica, Universit\`{a} di Padova, Italy, novati@math.unipd.it}}
\maketitle

\begin{abstract}
In this paper we consider the numerical solution of Fractional Differential
Equations by means of $m$-step recursions. The construction of such formulas
can be obtained in many ways. Here we study a technique based on the
rational approximation of the generating functions of Fractional Backward
Differentiation Formulas (FBDFs). Accurate approximations allow to define
methods which simulate the theoretical properties of the underlying FBDF
with important computational advantages. Numerical experiments are presented.

\smallskip

\noindent  {\bf MSC}: 65L06, 65F60, 65D32, 26A33, 34A08.
\end{abstract}

% \author{L.Aceto, C.Magherini, P. Novati \\
% %EndAName
% Department of Mathematics\\
% University of Padova, Italy}

\section{Introduction}

This paper deals with the solution of Fractional Differential Equations
(FDEs) of the type%
\begin{equation}
_{t_{0}}D_{t}^{\alpha }y(t)=g(t,y(t)),\quad t_{0}<t\leq T,\quad 0<\alpha <1,
\label{fde}
\end{equation}%
where $_{t_{0}}D_{t}^{\alpha }$ denotes the Caputo's fractional derivative
operator (see e.g. \cite{Pod} for an overview) defined as%
\begin{equation}
_{t_{0}}D_{t}^{\alpha }y(t)=\frac{1}{\Gamma (1-\alpha )}\int_{t_{0}}^{t}%
\frac{y^{\prime }(u)}{(t-u)^{\alpha }}du,  \label{cd}
\end{equation}%
in which $\Gamma $ denotes the Gamma function. As well known, the use of the
Caputo's definition for the fractional derivative allows to treat the
initial conditions at $t_{0}$ for FDEs in the same manner as for integer
order differential equations. Setting $y(t_{0})=y_{0}$ the solution of (\ref%
{fde}) exists and is unique under the hypothesis that $g$ is continuous and
fulfils a Lipschitz condition with respect to the second variable (see e.g.
\cite{DN} for a proof).

As for the integer order case $\alpha =1$, a classical approach for solving (%
\ref{fde}) is based on the discretization of the fractional derivative (\ref%
{cd}), which generalizes the well known Grunwald-Letnikov discretization
(see \cite[\S 2.2]{Pod}), leading to the so-called Fractional Backward
Differentiation Formulas (FBDFs) introduced in \cite{Lub}. Taking a uniform
mesh $t_{0}, t_{1}, \dots, t_{N}=T$ of the time domain with stepsize $%
h=(T-t_{0})/N,$ FBDFs are based on the full-term recursion%
\begin{equation}
\sum\nolimits_{j=0}^{n}\omega _{n-j}^{(p)}y_{j}=h^{\alpha
}g(t_{n},y_{n}),\quad p\leq n\leq N,  \label{rec}
\end{equation}%
where $y_{j}\approx y(t_{j})$ and $\omega _{n-j}^{(p)}$ 
are the Taylor coefficients of the generating function%
\begin{eqnarray}
\omega _{p}^{(\alpha )}(\zeta ) &=&\left( a_{0}+a_{1}\zeta +...+a_{p}\zeta
^{p}\right) ^{\alpha }  \label{genf} \\
&=&\sum\nolimits_{i=0}^{\infty }\omega _{i}^{(p)}\zeta ^{i},\quad \text{for}%
\quad 1\leq p\leq 6,  \label{tay}
\end{eqnarray}%
being $\left\{ a_{0},a_{1}, \dots, a_{p}\right\} $ the coefficients of the
underlying BDF. In \cite{Lub} it is also shown that the order $p$ of the BDF
is preserved.

We remember that for this kind of equations, there is generally an intrinsic
lack of regularity of the solution in a neighborhood of the starting point,
that is, depending on the function $g$, one may have $y(t)\sim
(t-t_{0})^{\alpha }$ as $t\rightarrow t_{0}$. For this reason, in order to
preserve the theoretical order $p$ of the numerical method, formula (\ref%
{rec}) is generally corrected as%
\begin{equation}
\sum\nolimits_{j=0}^{M}w_{n,j}y_{j}+\sum\nolimits_{j=0}^{n}\omega
_{n-j}^{(p)}y_{j}=h^{\alpha }g(t_{n},y_{n}),  \label{fbdfc}
\end{equation}%
where the sum $\sum\nolimits_{j=0}^{M}w_{n,j}y_{j}$ is the so-called
starting term, in which $M$ and the weights $w_{n,j}$ depend on $\alpha $
and $p$ (see \cite[Chapter 6]{BV} for a discussion).

Denoting by $\Pi _{m}$ the set of polynomials of degree not exceeding $m$,
our basic idea is to design methods based on rational approximations of (\ref%
{genf}), i.e.,
\begin{equation}
R_{m}(\zeta )\approx \omega _{p}^{(\alpha )}(\zeta ),\quad R_{m}(\zeta )=%
\frac{p_{m}(\zeta )}{q_{m}(\zeta )},\quad p_{m},q_{m}\in \Pi _{m}.
\label{gff}
\end{equation}%
Writing $p_{m}(\zeta )=\sum\nolimits_{j=0}^{m}\alpha _{j}\zeta ^{j}$ and $%
q_{m}(\zeta )=\sum\nolimits_{j=0}^{m}\beta _{j}\zeta ^{j}$, the above
approximation naturally leads to implicit $m$-step recursions of the type%
\begin{equation}
\sum\nolimits_{j=n-m}^{n}\alpha _{n-j}y_{j}=h^{\alpha
}\sum\nolimits_{j=n-m}^{n}\beta _{n-j}g(t_{j},y_{j}),\quad n\geq m.
\label{ks}
\end{equation}%
While the order of the FBDF is lost, we shall see that if the approximation (%
\ref{gff}) is rather accurate, then (\ref{ks}) is able to produce reliable
approximations to the solution. Starting from the initial data $%
y(t_{0})=y_{0}$, the first $m-1$ approximations $y_{1},\ldots,y_{m-1}$ can be
generated by the underlying FBDFs or even considering lower degree rational
approximations.

A formula of type (\ref{ks}) generalizes in some sense the methods based on
the Short Memory Principle in which the truncated Taylor expansion of (\ref%
{genf}) is considered (see \cite[\S 8.3]{Pod} for some examples).
Computationally, the advantages are noticeable, especially in terms of
memory saving whenever (\ref{fde}) arises from the semi-discretization of
fractional partial differential equations. We remark moreover that since the
initial approximations are used only at the beginning of the process, there
is no need to use a starting term to preserve the theoretical order as for
standard full-recursion multistep formulas. Moreover, as remarked in \cite%
{DFFW}, in particular when $\alpha \neq 1/2$, the use of a starting formula
as in (\ref{fbdfc}), that theoretically should ensure the order of the FBDF,
in practice may introduce substantial errors, causing unreliable numerical
solutions. For high-order formulas, this is due to the severe
ill-conditioning of the Vandermonde type systems one has to solve at each
integration step to generate the weights $w_{n,j}$ of the starting term. We
also remark that in a typical application $\alpha $, $y_{0}$ and possibly
also the function $g$ may be only known up to a certain accuracy (see \cite%
{DN} for a discussion), so that one may only be interested in having a
rather good approximation of the true solution.

For the construction of formulas of type (\ref{ks}), in this paper we
present a technique based on the rational approximation of the fractional
derivative operator (cf. \cite{Nov}). After considering a BDF discretization
of order $p$ of the first derivative operator, which can be represented by a
$N\times N$ triangular banded Toeplitz matrix $A_{p}$, we approximate
Caputo's fractional differential operator $_{t_{0}}D_{t}^{\alpha }$ by
calculating $A_{p}^{\alpha }$. This computation is performed by means of the
contour integral approximation, which leads to a rational approximation of $%
A_{p}^{\alpha }$ whose coefficients can be used to define (\ref{ks}). This
technique is based on the fact that the first column of $A_{p}^{\alpha } $
contains the first $N$ coefficients of the Taylor expansion of $\omega
_{p}^{(\alpha )}(\zeta )$, so exploiting the equivalence between the
approximation of $A_{p}^{\alpha }$ and $\omega _{p}^{(\alpha )}(\zeta )$.

The outline of the paper is the following. As in \cite{fgs}, the contour
integral is evaluated by means of the Gauss-Jacobi rule in Section \ref{sec2}%
. An error analysis of this approach is outlined in Section \ref{sec3},
together with some numerical experiments that confirm its effectiveness. In
Section \ref{sec5} we investigate the reliability of this approach for the
solution of fractional differential equations. In particular, we present
some results concerning the consistency and the linear stability. Finally,
in Section \ref{sec6} we consider the results of the method when applied to
the discretization of two well-known models of fractional diffusion.

\section{The approximation of the fractional derivative operator}

\label{sec2}

Denoting by $a_{0}, a_{1}, \dots, a_{p}$ the $p+1$ coefficients of a
Backward Differentiation Formula (BDF) of order $p$, with $1\leq p\leq 6$,
which discretizes the derivative operator (see \cite[Chapter III.1]{HNW} for
a background), we consider lower triangular banded Toeplitz matrices of the
type%
\begin{equation}
A_{p}=\left(
\begin{array}{ccccc}
a_{0} & 0 &  &  & 0 \\
\vdots & a_{0} & 0 &  &  \\
a_{p} & \vdots & \ddots & 0 &  \\
0 & \ddots &  & \ddots & 0 \\
& 0 & a_{p} & \cdots & a_{0}%
\end{array}%
\right) \in \mathbb{R}^{(N+1)\times (N+1)}.  \label{ap}
\end{equation}%
In this setting, $A_{p}^{\alpha }e_{1}$, $e_{1}=(1,0,\dots,0)^{T}$, contains
the whole set of coefficients of the corresponding FBDF for approximating
the solution of (\ref{fde}) in $t_{0},t_{1},\dots,t_{N}$, that is
\begin{equation}\label{omjp}
e_{j+1}^{T}A_{p}^{\alpha }e_{1}=\omega _{j}^{(p)},\quad 0\leq j\leq N,
\end{equation}
(cf. (\ref{tay})). The constraint $p\leq 6$ is due to the fact that BDFs are
not zero-stable for $p>6.$

From the theory of matrix functions (see \cite{Hi} for a survey), we know
that the fractional power of matrix can be written as a contour integral%
\begin{equation*}
A^{\alpha }=\frac{A}{2\pi i}\int_{\Gamma }z^{\alpha -1}(zI-A)^{-1}dz,
\end{equation*}%
where $\Gamma $ is a suitable closed contour enclosing the spectrum of $A$, $%
\sigma (A)$, in its interior. The following known result (see, e.g., \cite%
{BHM}) expresses $A^{\alpha }$ in terms of a real integral.

\begin{proposition}
\label{pro1}Let $A\in \mathbb{R}^{N\times N}$ be such that $\sigma
(A)\subset \mathbb{C}\backslash \left( -\infty ,0\right] $. For $0<\alpha <1$
the following representation holds%
\begin{equation}
A^{\alpha }=\frac{A\sin (\alpha \pi )}{\alpha \pi }\int_{0}^{\infty }(\rho
^{1/\alpha }I+A)^{-1}d\rho .  \label{rei}
\end{equation}
\end{proposition}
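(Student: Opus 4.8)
The plan is to start from the contour-integral representation $A^{\alpha }=\frac{A}{2\pi i}\int_{\Gamma }z^{\alpha -1}(zI-A)^{-1}\,dz$ and deform the contour $\Gamma $ onto the negative real axis, exploiting the hypothesis $\sigma (A)\subset \mathbb{C}\backslash (-\infty ,0]$, which guarantees that $(zI-A)^{-1}$ is analytic in a neighbourhood of the branch cut of $z^{\alpha -1}$. First I would take $\Gamma $ to be a keyhole contour consisting of a large circle of radius $R$, a small circle of radius $\varepsilon $ around the origin, and two horizontal segments just above and just below the negative real axis. Since $0<\alpha <1$, the integrand decays like $|z|^{\alpha -2}$ at infinity and is integrable like $|z|^{\alpha -1}$ near $0$, so the contributions of the outer and inner circles vanish as $R\rightarrow \infty $ and $\varepsilon \rightarrow 0^{+}$, leaving only the two segments along the cut.

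On the segment above the cut, write $z=te^{i\pi }$ with $t>0$, so $z^{\alpha -1}=t^{\alpha -1}e^{i\pi (\alpha -1)}$; on the segment below, $z=te^{-i\pi }$, so $z^{\alpha -1}=t^{\alpha -1}e^{-i\pi (\alpha -1)}$. Accounting for the orientation of the keyhole, the two segment contributions combine into
\begin{equation*}
A^{\alpha }=\frac{A}{2\pi i}\int_{0}^{\infty }t^{\alpha -1}\left( e^{i\pi (\alpha -1)}-e^{-i\pi (\alpha -1)}\right) (-tI-A)^{-1}\,dt.
\end{equation*}
Using $e^{i\pi (\alpha -1)}-e^{-i\pi (\alpha -1)}=2i\sin (\pi (\alpha -1))=-2i\sin (\alpha \pi )$ and $(-tI-A)^{-1}=-(tI+A)^{-1}$, this simplifies to
\begin{equation*}
A^{\alpha }=\frac{A\sin (\alpha \pi )}{\pi }\int_{0}^{\infty }t^{\alpha -1}(tI+A)^{-1}\,dt.
\end{equation*}
Finally I would perform the substitution $t=\rho ^{1/\alpha }$, so that $t^{\alpha -1}\,dt=\rho ^{(\alpha -1)/\alpha }\cdot \frac{1}{\alpha }\rho^{(1-\alpha)/\alpha}\,d\rho =\frac{1}{\alpha }\,d\rho $ and $(tI+A)^{-1}=(\rho ^{1/\alpha }I+A)^{-1}$, which yields exactly (\ref{rei}).

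The main obstacle is the rigorous justification of the contour deformation and the vanishing of the circular arcs: one must check that no eigenvalue of $A$ obstructs the deformation (guaranteed by $\sigma (A)\cap (-\infty ,0]=\varnothing $, which makes $\|(zI-A)^{-1}\|$ uniformly bounded on compact subsets of the deformed region) and that the resulting improper integral converges in norm. The bound $\|(zI-A)^{-1}\|\leq C/\mathrm{dist}(z,\sigma (A))$ handles the large circle together with the factor $|z|^{\alpha -1}$, while near the origin the resolvent stays bounded (since $0\notin\sigma(A)$) and $\int_{0}|z|^{\alpha -1}\,|dz|<\infty$ because $\alpha>0$; everything else is the bookkeeping of branch values and the change of variables, which is routine.
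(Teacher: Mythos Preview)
Your derivation is correct: the keyhole deformation of the contour integral onto the branch cut, the identification of the jump $e^{i\pi(\alpha-1)}-e^{-i\pi(\alpha-1)}=-2i\sin(\alpha\pi)$, and the final substitution $t=\rho^{1/\alpha}$ are all carried out accurately, and the vanishing of the circular arcs is justified by the estimates you give. Note, however, that the paper does not actually supply a proof of this proposition; it is quoted as a known result with a reference to \cite{BHM}, so there is no argument in the paper to compare against---your approach is precisely the standard one that underlies that citation.
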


Of course the above result holds also in our case since $\sigma
(A_{p})=\{a_{0}\}$ and $a_{0}>0$ for each $1\leq p\leq 6.$ At this point,
for each suitable change of variable for $\rho $, a $k$-point quadrature
rule for the computation of the integral in (\ref{rei}) yields a rational
approximation of the type%
\begin{equation}
A_{p}^{\alpha }\approx A_{p}\widetilde{R}_{k}(A_{p}):=A_{p}\sum%
\nolimits_{j=1}^{k}\gamma _{j}(\eta _{j}I+A_{p})^{-1},  \label{rapp}
\end{equation}%
where the coefficients $\gamma _{j}$ and $\eta _{j}$ depend on the
substitution and the quadrature formula. This technique has been used in
\cite{Nov}, where the author applies the Gauss-Legendre rule to (\ref{rei})
after the substitution%
\begin{equation*}
\rho =a_{0}^{\alpha }\left( \cos \theta \right) ^{-\alpha /(1-\alpha )}\sin
\theta ,
\end{equation*}%
which generalizes the one presented in \cite{HHT} for the case $\alpha =1/2.$

In order to remove the dependence of $\alpha $ inside the integral we
consider the change of variable
\begin{equation}
\rho ^{1/\alpha }=\tau \frac{1-t}{1+t},\qquad \tau >0,  \label{js}
\end{equation}%
yielding
\begin{eqnarray*}
&&\frac{1}{\alpha }\int_{0}^{\infty }(\rho ^{1/\alpha }I+A_{p})^{-1}d\rho \\
&=&2\int_{-1}^{1}\left( \tau \frac{1-t}{1+t}\right) ^{\alpha -1}\left( \tau
\frac{1-t}{1+t}I+A_{p}\right) ^{-1}\frac{\tau }{\left( 1+t\right) ^{2}}dt \\
&=&2\tau ^{\alpha }\int_{-1}^{1}\left( 1-t\right) ^{\alpha -1}\left(
1+t\right) ^{-\alpha }\left( \tau \left( 1-t\right) I+\left( 1+t\right)
A_{p}\right) ^{-1}dt,
\end{eqnarray*}%
and hence%
\begin{equation}
A_{p}^{\alpha }=\frac{2\sin (\alpha \pi )\tau ^{\alpha }}{\pi }%
A_{p}\int_{-1}^{1}\left( 1-t\right) ^{\alpha -1}\left( 1+t\right) ^{-\alpha
}\left( \tau \left( 1-t\right) I+\left( 1+t\right) A_p\right) ^{-1}dt.
\label{nint}
\end{equation}%
The above formula naturally leads to the use of a $k$-point Gauss-Jacobi
rule for the approximation of $A_{p}^{\alpha }e_{1}$ and hence to a rational
approximation (\ref{rapp}).

The following result can be proved by direct computation.

\begin{proposition}
\label{p1}Let $A_{p}\in \mathbb{R}^{N\times N}$ be a matrix of type (\ref{ap}%
), and let $\overline{A}_{p}=\frac{1}{a_{0}}A_{p}$. Then the components of $%
(\xi I+\overline{A}_{p})^{-1}e_{1}$, $\xi \neq -1$, are given by
\begin{eqnarray*}
\upsilon _{1}^{(p)}(\xi ) &=&\frac{1}{\xi +1},  \\
\upsilon _{j}^{(p)}(\xi ) &=&\frac{c_{2,j}^{(p)}}{\left( \xi +1\right) ^{2}}%
+\ldots+\frac{c_{j,j}^{(p)}}{\left( \xi +1\right) ^{j}},\quad 2\leq j\leq N,
\end{eqnarray*}%
where the coefficients $c_{i,j}^{(p)}$ depend on the order $p$. For $p=1$ we
simply have $\left\{ a_{0},a_{1}\right\} =\left\{ 1,-1\right\} $, and hence%
\begin{equation*}
\upsilon _{j}^{(1)}(\xi )=\frac{1}{\left( \xi +1\right) ^{j}},\quad 1\leq
j\leq N.
\end{equation*}
\end{proposition}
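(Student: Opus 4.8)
The plan is to exploit the lower‑triangular banded Toeplitz structure of $\overline{A}_p$ to solve the linear system $(\xi I+\overline{A}_p)\upsilon=e_1$ explicitly, column by column. Write $\overline{A}_p=I+N_p+ (\text{lower bands})$ more precisely as $\overline{A}_p=\sum_{i=0}^{p}\bar a_i S^i$ where $\bar a_i=a_i/a_0$ (so $\bar a_0=1$) and $S$ is the lower shift matrix. Then $\xi I+\overline{A}_p=(\xi+1)I+\sum_{i=1}^{p}\bar a_i S^i$, and since the second summand is strictly lower triangular (nilpotent), one may formally expand
\[
(\xi I+\overline{A}_p)^{-1}=\frac{1}{\xi+1}\Bigl(I+\tfrac{1}{\xi+1}\textstyle\sum_{i=1}^{p}\bar a_i S^i\Bigr)^{-1}
=\sum_{\ell\ge 0}\frac{(-1)^\ell}{(\xi+1)^{\ell+1}}\Bigl(\textstyle\sum_{i=1}^{p}\bar a_i S^i\Bigr)^{\ell}.
\]
Applying this to $e_1$ and reading off the $j$‑th component, the term $\ell=0$ contributes $1/(\xi+1)$ only in position $j=1$, giving $\upsilon_1^{(p)}(\xi)=1/(\xi+1)$; for $j\ge 2$ every contributing power of $S$ must have $\ell\ge 1$, so the lowest power of $(\xi+1)^{-1}$ that appears is $2$, and the highest is $j$ (since $S^j e_1=0$ in $\mathbb{R}^N$ shifts beyond the $j$‑th entry only after $j-1$ applications of $S$, and each factor $\sum_i \bar a_i S^i$ lowers by at least one). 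This yields the claimed partial‑fraction form $\upsilon_j^{(p)}(\xi)=\sum_{i=2}^{j} c_{i,j}^{(p)}(\xi+1)^{-i}$, with the coefficients $c_{i,j}^{(p)}$ being fixed polynomials in $\bar a_1,\dots,\bar a_p$ (hence depending only on $p$), obtained by collecting, over all compositions of the ``drop'' $j-1$ into parts of size between $1$ and $p$, the corresponding products of the $\bar a_i$.

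For the special case $p=1$ one has $\{a_0,a_1\}=\{1,-1\}$, so $\overline{A}_1=I-S$ and $\xi I+\overline{A}_1=(\xi+1)I-S$; then the Neumann series telescopes exactly,
\[
(\xi I+\overline{A}_1)^{-1}e_1=\sum_{\ell=0}^{N-1}\frac{S^\ell e_1}{(\xi+1)^{\ell+1}},
\]
whose $j$‑th component is precisely $1/(\xi+1)^{j}$, establishing the formula for $\upsilon_j^{(1)}$.

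The computation is genuinely routine once the Neumann‑series observation is in place; the only point requiring a little care — the "main obstacle", such as it is — is the bookkeeping that pins down the exact range of powers of $(\xi+1)^{-1}$, namely that for $j\ge 2$ the exponent runs from $2$ (not $1$) up to $j$. The lower bound $2$ is the slightly non‑obvious part: it holds because the $\ell=0$ and the "$S^0$ part" of each higher power never move mass off the diagonal, so any non‑zero off‑diagonal entry of $(\xi I+\overline{A}_p)^{-1}$ carries at least two factors of $(\xi+1)^{-1}$ — one from the scalar prefactor and at least one more from having applied $\sum_{i\ge 1}\bar a_i S^i$ at least once. Alternatively, and perhaps more cleanly for the write‑up, one can avoid series altogether and argue by forward substitution: the first equation gives $\upsilon_1=1/(\xi+1)$ directly, and the $j$‑th equation reads $(\xi+1)\upsilon_j=-\sum_{i=1}^{\min(j-1,p)}\bar a_i\,\upsilon_{j-i}$, so an immediate induction on $j$ shows $\upsilon_j$ is a $(\xi+1)^{-1}$‑multiple of a combination of $\upsilon_1,\dots,\upsilon_{j-1}$, which inductively lies in $\mathrm{span}\{(\xi+1)^{-1},\dots,(\xi+1)^{-(j-1)}\}$, giving the stated form with the pole order at most $j$ and at least $2$.
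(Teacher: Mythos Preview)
Your argument is correct and is precisely the ``direct computation'' the paper alludes to (the paper gives no detailed proof beyond that phrase): writing $\xi I+\overline{A}_p=(\xi+1)I+\sum_{i\ge 1}\bar a_i S^i$ with $S$ nilpotent and expanding either via the finite Neumann series or, equivalently, by forward substitution yields exactly the stated partial-fraction structure, and the $p=1$ specialization is immediate. One minor wording slip: $S^j e_1$ is $e_{j+1}$ rather than $0$ (unless $j=N$), but your conclusion is unaffected since what matters is that $(\sum_{i\ge 1}\bar a_i S^i)^\ell e_1$ has vanishing $j$-th component once $\ell\ge j$, which follows because each factor shifts down by at least one position.
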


The above proposition shows that the components of
\begin{equation*}
\left( \tau \left( 1-t\right) I+\left( 1+t\right) A_{p}\right) ^{-1}e_{1}
\end{equation*}%
are analytic functions in a suitable open set containing $[-1,1]$ in its
interior, since they are sum of functions of the type%
\begin{equation} \label{fl}
\frac{\left( 1+t\right) ^{l-1}}{\left( \tau \left( 1-t\right) +a_0\left(
1+t\right) \right) ^{l}},\quad l\geq 1,
\end{equation}%
whose pole lies outside $[-1,1]$ for $\tau >0$ (recall that $a_0>0$ for $1\leq p \leq 6$). 
In this sense, the lack of
regularity of the integrand in (\ref{nint}) due to the presence of $\alpha,$
is completely absorbed by the Jacobi weight function so that the
Gauss-Jacobi rule yields a very efficient tool for the computation of $%
A_{p}^{\alpha }$.

Increasing $k$ the approximation (\ref{rapp}) can be used to approximate the
whole set of coefficients of the FBDFs. We remark that the computation of
the vectors $(\eta _{j}I+A_{p})^{-1}e_{1}$ does not constitute a problem
because of the structure of $A_{p}$ (see (\ref{ap})). We also point out that
since our aim is to construct reliable formulas of type (\ref{ks}) we
actually do not need to evaluate (\ref{rapp}). Indeed we just need to know
the scalars $\gamma _{j}$ and $\eta _{j}$, and then, using an algorithm to
transform partial fractions to polynomial quotient, we obtain the
approximation%
\begin{equation}
z^{\alpha }\approx z\widetilde{R}_{k}(z)=z\frac{\widetilde{p}_{k-1}(z)}{%
\widetilde{q}_{k}(z)},\quad z=a_{0}+a_{1}\zeta +\ldots+a_{p}\zeta ^{p},
\label{ptqt}
\end{equation}%
where $\widetilde{p}_{k-1}\in \Pi _{k-1}$ and $\widetilde{q}_{k}\in \Pi
_{k}. $ This finally leads to the approximation (\ref{gff}) with
\begin{eqnarray}
p_{m}(\zeta ) &=&(a_{0}+a_{1}\zeta +\ldots+a_{p}\zeta ^{p})\,\widetilde{p}%
_{k-1}(a_{0}+a_{1}\zeta +\ldots+a_{p}\zeta ^{p}),  \label{pm} \\
q_{m}(\zeta ) &=&\widetilde{q}_{k}(a_{0}+a_{1}\zeta +\ldots+a_{p}\zeta ^{p}),
\label{qm}
\end{eqnarray}%
in which $m=kp$. We remark that whenever the procedure for the definition of
the coefficients $\gamma _{j}$ and $\eta _{j}$ has been set for a given $%
\alpha, $ one can compute the corresponding coefficients in the $m$-step
formula (\ref{ks}) once and for all.

\section{Theoretical error analysis}

\label{sec3}

Denoting by $J_{k}(A_{p})$ the result of the Gauss-Jacobi rule for the
approximation of
$$J(A_{p})=\int_{-1}^{1}\left( 1-t\right) ^{\alpha -1}\left( 1+t\right)
^{-\alpha }\left( \tau \left( 1-t\right) I+\left( 1+t\right) A_{p}\right)
^{-1}dt, 
$$
by (\ref{nint}) the corresponding approximation to $A_{p}^{\alpha }$ is
given by%
\begin{equation}
A_{p}^{\alpha }\approx A_{p}\widetilde{R}_{k}(A_{p}),\qquad \widetilde{R}%
_{k}(A_{p})=\frac{2\sin (\alpha \pi )\tau ^{\alpha }}{\pi }J_{k}(A_{p}).
\label{rtk}
\end{equation}%
In this section we analyze the error term componentwise, that is, (see (\ref{omjp}),
\begin{eqnarray}
E_{j} &:=&\omega _{j}^{(p)}-e_{j+1}^{T}A_{p}\widetilde{R}%
_{k}(A_{p})e_{1}=e_{j+1}^{T}A_{p}\left( A_{p}^{\alpha -1}-\widetilde{R}%
_{k}(A_{p})\right) e_{1}  \label{Ej0} \\
&=&\frac{2\sin (\alpha \pi )\tau ^{\alpha }}{\pi }e_{j+1}^{T}A_{p}\left(
J(A_{p})-J_{k}(A_{p})\right) e_{1},\quad 0\le j\leq N,  \notag
\end{eqnarray}%
which is the error in the computation of the $j$-th coefficient of the
Taylor expansion of $\omega _{p}^{(\alpha )}(\zeta )$. Numerically one
observes that the quality of the approximation tends to deteriorate when the
dimension of the problem $N$ grows. In this sense we are particularly
interested in observing the dependence of the error term on $j$ and $k$ for $%
j\gg k$ and to find a strategy to define the parameter $\tau $ of the
substitution (\ref{js}) in this situation. As we shall see in the remainder
of the paper, this parameter plays a crucial role for the quality of the
approximation.

We restrict our analysis to the case of $p=1$ for which $a_0=-a_1=1.$ 
In this situation, defining
the vector%
\begin{equation}
r:=\left( J(A_{1})-J_{k}(A_{1})\right) e_{1},  \label{rj}
\end{equation}%
we have that
\begin{equation*}
E_{j}=\frac{2\sin (\alpha \pi )\tau ^{\alpha }}{\pi }e_{j+1}^{T}A_{1}r,
\end{equation*}%
and therefore
\begin{equation}
\left\vert E_{j}\right\vert \leq \frac{2\sin (\alpha \pi )\tau ^{\alpha }}{%
\pi }\left( \left\vert r_{j}\right\vert +\left\vert r_{j-1}\right\vert
\right) .  \label{Ej}
\end{equation}%
The analysis thus reduces to the study of the components of the vector (\ref%
{rj}). By Proposition \ref{p1},  see also (\ref{fl}), the $j$-th component of the vector 
$$\left(\tau \left( 1-t\right) I+\left( 1+t\right) A_{1}\right) ^{-1}e_{1}$$ 
is given by%
\begin{equation}
f_{j}(t)=\frac{\left( 1+t\right) ^{j-1}}{\left( \tau (1-t)+1+t\right) ^{j}},
\label{fj}
\end{equation}%
so that $r_{j}$ is the error term of the $k$-point Gauss-Jacobi formula
applied to the computation of%
\begin{equation}
\int_{-1}^{1}\left( 1-t\right) ^{\alpha -1}\left( 1+t\right) ^{-\alpha
}f_{j}(t)dt.  \label{fjj}
\end{equation}

We start with the following known result, \cite{Hil}.

\begin{theorem}
\label{t1}The error term of the $k$-point Gauss-Jacobi formula applied to
the computation of%
\begin{equation*}
\int\nolimits_{-1}^{1}(1-t)^{\alpha _{1}}(1+t)^{\alpha _{2}} g(t) dt,\quad
\alpha _{1},\alpha _{2}>-1,\quad g\in C^{2k}([-1,1]),
\end{equation*}%
is given by%
\begin{equation*}
C_{k,\alpha _{1},\alpha _{2}}g^{(2k)}(\xi ),\quad -1<\xi <1,
\end{equation*}%
where%
\begin{equation}
C_{k,\alpha _{1},\alpha _{2}}:=\frac{\Gamma (k+\alpha _{1}+1)\Gamma
(k+\alpha _{2}+1)\Gamma (k+\alpha _{1}+\alpha _{2}+1)k!}{(2k+\alpha
_{1}+\alpha _{2}+1)\left[ \Gamma (2k+\alpha _{1}+\alpha _{2}+1)\right]
^{2}(2k)!}2^{2k+\alpha _{1}+\alpha _{2}+1}.  \label{cab}
\end{equation}
\end{theorem}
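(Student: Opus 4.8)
The plan is to combine the classical Gaussian-quadrature remainder formula with the explicit normalization constants of the Jacobi orthogonal polynomials. First I would recall that the $k$-point Gauss--Jacobi rule relative to the weight $w(t)=(1-t)^{\alpha_1}(1+t)^{\alpha_2}$ has as nodes the $k$ zeros $t_1,\dots,t_k$ of the Jacobi polynomial $P_k^{(\alpha_1,\alpha_2)}$, all lying in $(-1,1)$, and that it integrates exactly every polynomial of degree at most $2k-1$. Given $g\in C^{2k}([-1,1])$, let $H\in\Pi_{2k-1}$ be the Hermite interpolant of $g$ matching the values $g(t_i)$ and the derivatives $g'(t_i)$ at the nodes; the standard remainder of Hermite interpolation gives, for each $t\in[-1,1]$,
\[
g(t)-H(t)=\frac{g^{(2k)}(\xi_t)}{(2k)!}\prod_{i=1}^{k}(t-t_i)^2,\qquad \xi_t\in(-1,1).
\]
Since the rule is exact on $H$ and $H(t_i)=g(t_i)$ at each node, subtracting the exact integral of $H$ shows that the quadrature error equals $\int_{-1}^{1}w(t)\bigl(g(t)-H(t)\bigr)\,dt$.

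Next I would substitute the interpolation remainder and note that $\prod_{i=1}^{k}(t-t_i)^2$ equals $k_k^{-2}\bigl(P_k^{(\alpha_1,\alpha_2)}(t)\bigr)^2$, where $k_k$ is the leading coefficient of $P_k^{(\alpha_1,\alpha_2)}$; in particular $w(t)\prod_{i=1}^{k}(t-t_i)^2\ge 0$ on $[-1,1]$ and is integrable because $\alpha_1,\alpha_2>-1$. The mean value theorem for integrals (legitimate thanks to this sign condition, together with the intermediate value property of $g^{(2k)}$) then allows one to extract a single value $g^{(2k)}(\xi)$, $\xi\in(-1,1)$, from the integral, so that the error becomes
\[
\frac{g^{(2k)}(\xi)}{(2k)!}\,\frac{1}{k_k^{2}}\int_{-1}^{1}w(t)\,\bigl(P_k^{(\alpha_1,\alpha_2)}(t)\bigr)^2\,dt .
\]

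It remains to insert the two classical constants for Jacobi polynomials (available in any standard reference on orthogonal polynomials): the squared $L^2$-norm
\[
\int_{-1}^{1}w(t)\,\bigl(P_k^{(\alpha_1,\alpha_2)}(t)\bigr)^2\,dt=\frac{2^{\alpha_1+\alpha_2+1}}{2k+\alpha_1+\alpha_2+1}\cdot\frac{\Gamma(k+\alpha_1+1)\,\Gamma(k+\alpha_2+1)}{\Gamma(k+\alpha_1+\alpha_2+1)\,k!},
\]
and the leading coefficient $k_k=\dfrac{\Gamma(2k+\alpha_1+\alpha_2+1)}{2^{k}\,k!\,\Gamma(k+\alpha_1+\alpha_2+1)}$. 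Dividing the norm by $k_k^2$ (which introduces the factors $2^{2k}$, $(k!)^2$ and $\Gamma(k+\alpha_1+\alpha_2+1)^2$) and then by $(2k)!$, and simplifying the Gamma factors, yields precisely the constant $C_{k,\alpha_1,\alpha_2}$ of (\ref{cab}). I do not expect any genuine obstacle in this proof, since it is entirely classical; the only step needing a little care is the Gamma-function bookkeeping in the last simplification, and, if one wants to keep the argument self-contained, the verification of the Hermite remainder formula and of the sign condition that justifies the use of the integral mean value theorem.
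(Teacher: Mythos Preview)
Your argument is correct and is essentially the classical derivation of the Gaussian-quadrature remainder: Hermite interpolation at the nodes, the nonnegativity of $w(t)\prod_i(t-t_i)^2$ to justify extracting a single $g^{(2k)}(\xi)$, and then insertion of the standard Jacobi normalization constants. The Gamma-function bookkeeping you sketch does indeed collapse to the stated $C_{k,\alpha_1,\alpha_2}$; the only delicate point, which you correctly flag, is that one should argue via the continuity of the quotient $\bigl(g(t)-H(t)\bigr)\big/\prod_i(t-t_i)^2$ (it extends continuously to the nodes because numerator and denominator both vanish to order two there) rather than via continuity of $\xi_t$ itself.

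As for comparison with the paper: there is nothing to compare. The paper does not prove Theorem~\ref{t1}; it simply quotes it as a known result from Hildebrand's textbook \cite{Hil} and then uses it. Your proposal therefore supplies strictly more than what the paper contains for this statement.
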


\begin{lemma}
\label{l1}Let $0<\alpha <1$ and $C_{k,\alpha }:=C_{k,\alpha -1,-\alpha }$.
Then
\begin{equation*}
C_{k,\alpha }\sim \frac{\pi 2^{1-2k}}{(2k)!}.
\end{equation*}
\end{lemma}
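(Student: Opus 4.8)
The plan is to make the constant (\ref{cab}) completely explicit for the special exponents $\alpha_{1}=\alpha-1$, $\alpha_{2}=-\alpha$, and then to read off the limit from Stirling's formula. First I would substitute $\alpha_{1}=\alpha-1$ and $\alpha_{2}=-\alpha$ into $C_{k,\alpha_{1},\alpha_{2}}$. The crucial simplification is that $\alpha_{1}+\alpha_{2}+1=0$, so $\Gamma(k+\alpha_{1}+\alpha_{2}+1)=\Gamma(k)$, $2k+\alpha_{1}+\alpha_{2}+1=2k$ and $2^{2k+\alpha_{1}+\alpha_{2}+1}=2^{2k}$; together with $k+\alpha_{1}+1=k+\alpha$ and $k+\alpha_{2}+1=k+1-\alpha$ this yields
\begin{equation*}
C_{k,\alpha}=\frac{\Gamma(k+\alpha)\,\Gamma(k+1-\alpha)\,\Gamma(k)\,k!}{2k\,[\Gamma(2k)]^{2}\,(2k)!}\,2^{2k}.
\end{equation*}

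Next I would reduce the claim to the equivalent statement $C_{k,\alpha}\,(2k)!\sim\pi\,2^{1-2k}$ and simplify the expression above. Writing $k!=k\,\Gamma(k)$, the numerator becomes $k\,[\Gamma(k)]^{2}\,\Gamma(k+\alpha)\,\Gamma(k+1-\alpha)$, and inserting the Legendre duplication formula $\Gamma(2k)=\pi^{-1/2}2^{2k-1}\Gamma(k)\Gamma(k+\tfrac{1}{2})$ in the denominator cancels the factor $[\Gamma(k)]^{2}$ and most of the powers of $2$, leaving
\begin{equation*}
C_{k,\alpha}\,(2k)!=\frac{\pi\,\Gamma(k+\alpha)\,\Gamma(k+1-\alpha)}{2^{2k-1}\,[\Gamma(k+\tfrac{1}{2})]^{2}}.
\end{equation*}

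Finally I would invoke the standard Stirling consequence that $\Gamma(k+a)\sim k^{a-1/2}\,\Gamma(k+\tfrac{1}{2})$ for any fixed $a$ (equivalently $\Gamma(k+a)/\Gamma(k+b)\sim k^{a-b}$). Applying this with $a=\alpha$ and with $a=1-\alpha$, and using $(\alpha-\tfrac{1}{2})+((1-\alpha)-\tfrac{1}{2})=0$, the product $\Gamma(k+\alpha)\Gamma(k+1-\alpha)$ is asymptotic to $[\Gamma(k+\tfrac{1}{2})]^{2}$; in particular all the $\alpha$-dependence and all the powers of $k$ disappear, so $C_{k,\alpha}(2k)!\sim\pi/2^{2k-1}=\pi\,2^{1-2k}$, which is the claim. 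The only place needing care is the bookkeeping of the powers of $2$ and of $k$ through the duplication formula and the Stirling ratio; once the algebra is arranged so that $[\Gamma(k)]^{2}$ and then $[\Gamma(k+\tfrac{1}{2})]^{2}$ cancel, nothing else remains to check. (Alternatively one may avoid the duplication formula via $\Gamma(k+\alpha)\Gamma(k+1-\alpha)=(2k)!\,B(k+\alpha,k+1-\alpha)$ and an estimate of the Beta integral, but the route above is shorter.)
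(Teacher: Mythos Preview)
Your argument is correct and follows essentially the same route as the paper: both substitute $\alpha_{1}=\alpha-1$, $\alpha_{2}=-\alpha$ into (\ref{cab}), apply the Legendre duplication formula, and finish with the standard Gamma-ratio asymptotic $\Gamma(k+a)/\Gamma(k+b)\sim k^{a-b}$ to obtain $\Gamma(k+\alpha)\Gamma(k+1-\alpha)\sim[\Gamma(k+\tfrac12)]^{2}$. The only cosmetic difference is that the paper first evaluates $C_{k,1/2}$ exactly and then argues $C_{k,\alpha}\to C_{k,1/2}$, whereas you go straight to the asymptotic; the algebra and the key identities are the same.
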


\begin{proof}
By (\ref{cab}) we easily obtain%
\begin{equation*}
C_{k,\alpha }=\frac{\Gamma (k+\alpha )\Gamma (k-\alpha +1)\left[ \Gamma (k)%
\right] ^{2}}{\left[ \Gamma (2k)\right] ^{2}(2k)!}2^{2k-1}.
\end{equation*}%
Using the Legendre formula
\begin{equation*}
\Gamma \left( k+\frac{1}{2}\right) \Gamma (k)=\sqrt{\pi }\Gamma (2k)2^{1-2k},
\end{equation*}%
we have that%
\begin{equation*}
C_{k,1/2}=\frac{\pi 2^{1-2k}}{(2k)!}.
\end{equation*}%
Moreover, since for $a,b\in (0,1)$%
\begin{equation*}
k^{b-a}\frac{\Gamma (k+a)}{\Gamma (k+b)}=1+O\left( \frac{1}{k}\right) ,
\end{equation*}%
we have that $\Gamma(k+\alpha)\Gamma(k-\alpha+1) = \left[\Gamma(k+\frac{1}{2})\right]^2 (1+O(k^{-1}))$ and,
consequently, $C_{k,\alpha }\rightarrow C_{k,1/2}$ as $k\rightarrow \infty $.
\end{proof}

\begin{remark}\label{rpade}
\label{rempade} If we set $\tau =1$ in (\ref{js}) we obtain $r_{j}=0$ for
each $j=1,2,\ldots ,2k$ since $f_{j}\in \Pi _{j-1},$ see (\ref{fj}). From (%
\ref{ptqt}), with $p=1,$ and (\ref{rtk}) one therefore gets
$$(1-\zeta )^{\alpha -1}-\widetilde{R}_{k}(1-\zeta )=(1-\zeta )^{\alpha -1}-%
\frac{\widetilde{p}_{k-1}(1-\zeta )}{\widetilde{q}_{k}(1-\zeta )}=O(\zeta
^{2k}), $$
so that $\widetilde{R}_{k}(1-\zeta )$ is the $(k-1,k)$ Pad\'{e} approximant
of $(1-\zeta )^{\alpha -1}$ with expansion point $\zeta =0.$ More generally,
if $\tau \in (0,1]$ then the resulting rational approximation coincides with
the same Pad\'{e} approximant with expansion point $\zeta =1-\tau \ $(cf.
\cite[Lemma~4.4]{fgs}).
\end{remark}

Numerically, it is quite clear that the best results are obtained for $\tau $
strictly less than 1, so that in what follows we always assume to work with $%
\tau \in (0,1)$. Indeed, in this situation we are able to approximate the
Taylor coefficients with a more uniform distribution of the error with
respect to $j.$ By Theorem~\ref{t1}, we need to bound $\left\vert
f_{j}^{(2k)}(t)\right\vert $ in the interval $[-1,1]$ in order to bound the
error term $E_{j}.$ We start with the following result.

\begin{proposition}
\label{p3} Let $0<\tau <1\ $and%
\begin{equation}
a:=\frac{1+\tau}{1-\tau }.  \label{atau}
\end{equation}%
For each $j$ and $k$
$$
\max_{\lbrack -1,1]}\left\vert f_{j}^{(2k)}(t)\right\vert \leq \left(
2k\right) !\frac{\sqrt{a}}{\left( \sqrt{a}-1\right) ^{2k+2}}\left( \frac{a+1%
}{2\sqrt{a}}\right) ^{j}. 
$$
\end{proposition}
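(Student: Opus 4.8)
\noindent The plan is to bound $f_j^{(2k)}(t)$ for $t\in[-1,1]$ by Cauchy's integral formula on a carefully chosen circle, exploiting that $f_j$ in (\ref{fj}), regarded as a function of a complex variable $z$, is a rational function whose only singularity is the pole at $z=-a$. First I would rewrite its denominator in terms of the quantity $a$ from (\ref{atau}): since $\tau(1-t)+1+t=(1+\tau)+(1-\tau)t=(1-\tau)(a+t)$ and $1/(1-\tau)=(a+1)/2$, one has
\[
f_j(z)=\frac{(1+z)^{j-1}}{(1-\tau)^j(a+z)^j},\qquad z\in\mathbb{C}\setminus\{-a\}.
\]
Because $0<\tau<1$ forces $a>1$, and hence $1<\sqrt a<a$, the circle $\gamma=\{\,|z|=\sqrt a\,\}$ contains the segment $[-1,1]$ in its interior and leaves the pole $-a$ outside; $\gamma$ is the contour I would integrate over. (The reason for this particular radius is that $\gamma$ is precisely the Apollonius circle, with respect to $-1$ and $-a$, on which $|(1+z)/(a+z)|$ is constant, so that in $f_j$ almost all powers of $|1+z|$ cancel.)

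The key computation is the identity $|a+z|^{2}=a\,|1+z|^{2}$ on $\gamma$: for $z=\sqrt a\,e^{i\theta}$ one gets $|a+z|^{2}=a^{2}+a+2a\sqrt a\cos\theta=a\,(1+a+2\sqrt a\cos\theta)=a\,|1+z|^{2}$. Substituting $|a+z|=\sqrt a\,|1+z|$ into $f_j$ and using $1/(1-\tau)=(a+1)/2$ gives, on $\gamma$,
\[
|f_j(z)|=\frac{1}{(1-\tau)^{j}\,a^{j/2}\,|1+z|}=\frac{1}{|1+z|}\left(\frac{a+1}{2\sqrt a}\right)^{\!j}.
\]
Since $-1$ lies inside $\gamma$, we have $\min_{z\in\gamma}|1+z|=\sqrt a-1$, attained at $z=-\sqrt a$, so that $\max_{z\in\gamma}|f_j(z)|\le(\sqrt a-1)^{-1}\big((a+1)/(2\sqrt a)\big)^{j}$.

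To conclude I would apply, for each $t\in[-1,1]$, the standard estimate
\[
|f_j^{(2k)}(t)|\le\frac{(2k)!}{2\pi}\,\mathrm{length}(\gamma)\,\frac{\max_{z\in\gamma}|f_j(z)|}{\min_{z\in\gamma}|z-t|^{\,2k+1}},
\]
where $\mathrm{length}(\gamma)=2\pi\sqrt a$ and, since $|t|\le 1<\sqrt a$, $\min_{z\in\gamma}|z-t|=\sqrt a-|t|\ge\sqrt a-1$. Combining the last two displays yields, uniformly in $t\in[-1,1]$,
\[
|f_j^{(2k)}(t)|\le(2k)!\,\sqrt a\,\frac{1}{(\sqrt a-1)^{2k+1}}\cdot\frac{1}{\sqrt a-1}\left(\frac{a+1}{2\sqrt a}\right)^{\!j}=(2k)!\,\frac{\sqrt a}{(\sqrt a-1)^{2k+2}}\left(\frac{a+1}{2\sqrt a}\right)^{\!j},
\]
which is exactly the claimed bound. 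I expect the only non-routine point to be the choice of $\gamma$: a Cauchy estimate on a circle centered at $t$ does not give the stated constants, because controlling the growth in $j$ then forces a radius that is too small near $t=1$ (to keep $-a$ outside), and the resulting bound degrades in $k$; on $\gamma$, instead, the identity $|a+z|=\sqrt a\,|1+z|$ makes the estimate collapse to precisely the asserted expression. Checking that $-a$ is exterior, bounding $|1+z|$ and $|z-t|$ below, and the Cauchy estimate itself are all direct computations.
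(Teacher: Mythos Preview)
Your proof is correct and follows essentially the same approach as the paper: Cauchy's integral formula on the circle $|z|=\sqrt a$ centered at the origin, combined with the fact that on this circle $|(1+z)/(a+z)|=1/\sqrt a$ is constant, so that $|f_j(z)|=|1+z|^{-1}\big((a+1)/(2\sqrt a)\big)^j$, and then the obvious lower bounds $|1+z|\ge\sqrt a-1$ and $|z-t|\ge\sqrt a-1$. Your explicit identification of $\gamma$ as the Apollonius circle for $-1$ and $-a$ is a nice conceptual gloss that the paper leaves implicit, but the computation and the resulting constants are identical.
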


\begin{proof}
The function $f_{j}$ can be written as%
\begin{equation*}
f_{j}(t)=\frac{\left( 1+t\right) ^{j-1}}{\left( a+t\right) ^{j}}\left( \frac{%
a+1}{2}\right) ^{j},\quad a>1.
\end{equation*}%
Using the Cauchy integral formula we have%
\begin{equation}
f_{j}^{(2k)}(t)=\frac{\left( 2k\right) !}{2\pi i}\int_{\Gamma }\frac{f_{j}(w)%
}{\left( w-t\right) ^{2k+1}}dw,  \label{ci}
\end{equation}%
where $\Gamma $ is a contour surrounding $t$ but not the pole $-a<-1$. We
take $\Gamma $ as the circle centered at the origin and radius $\rho $ such
that $1<\rho <a$, that is, we use the substitution $w=\rho e^{i\theta }$. We
obtain%
\begin{equation}
\max_{\lbrack -1,1]}\left\vert f_{j}^{(2k)}(t)\right\vert \leq \left(
2k\right) !\frac{\rho }{\left( \rho -1\right) ^{2k+1}}\max_{[0,2\pi
]}\left\vert f_{j}(\rho e^{i\theta })\right\vert .  \label{bnd2}
\end{equation}%
Taking $\rho =\sqrt{a}$ we have%
\begin{eqnarray*}
\left\vert f_{j}(\rho e^{i\theta })\right\vert &=&\left\vert \frac{1}{1+\rho
e^{i\theta }}\right\vert \left\vert \frac{1+\rho e^{i\theta }}{a+\rho
e^{i\theta }}\right\vert ^{j}\left( \frac{a+1}{2}\right) ^{j} \\
&=&\left\vert \frac{1}{1+\sqrt{a}e^{i\theta }}\right\vert \left\vert \frac{1+%
\sqrt{a}e^{i\theta }}{a+\sqrt{a}e^{i\theta }}\right\vert ^{j}\left( \frac{a+1%
}{2}\right) ^{j} \\
&=&\left\vert \frac{1}{1+\sqrt{a}e^{i\theta }}\right\vert \left( \frac{a+1}{2%
\sqrt{a}}\right) ^{j} \\
&\leq &\frac{1}{\left( \sqrt{a}-1\right) }\left( \frac{a+1}{2\sqrt{a}}%
\right) ^{j}.
\end{eqnarray*}%
By (\ref{bnd2}) we immediately achieve the result.
\end{proof}

The above result is rather accurate only for small values of $j$. Since $%
f_{j}(t)$ is growing in the interval $[-1,1]$, below we consider contours $%
\Gamma $ in (\ref{ci}) which are dependent on $t$, in order to balance this
effect.

\begin{proposition}
\label{p2}Let $0<\tau <1\ $ and $a$ as in (\ref{atau}). For $j\geq 2k+2$
\begin{equation}
\left\vert f_{j}^{(2k)}(t)\right\vert \leq \left( 2k\right) !\frac{\sqrt{a+1}%
+\sqrt{2}}{a-1}\frac{\left( 1+t\right) ^{\frac{j-1}{2}-k}}{(a+t)^{\frac{j}{2}%
+k}}\left( \frac{a+1}{2}\right) ^{j},\quad t\in \lbrack -1,1].  \label{bndd}
\end{equation}%
Moreover%
\begin{equation}
\max_{\lbrack -1,1]}\left\vert f_{j}^{(2k)}(t)\right\vert \leq \left(
2k\right) !\left( \sqrt{a+1}+\sqrt{2}\right) \frac{\left( \frac{j-2k-1}{4k+1}%
\right) ^{\frac{j-1}{2}-k}}{\left( \frac{j+2k}{4k+1}\right) ^{\frac{j}{2}+k}}%
\left( \frac{a+1}{2}\right) ^{j}\left( a-1\right) ^{-2k-\frac{3}{2}}
\label{b1}
\end{equation}%
for $j$ such that
\begin{equation}
a\leq \frac{j+6k+1}{j-2k-1},  \label{as}
\end{equation}%
and%
\begin{equation}
\max_{\lbrack -1,1]}\left\vert f_{j}^{(2k)}(t)\right\vert \leq \left(
2k\right) !\frac{\sqrt{a+1}+\sqrt{2}}{a-1}\frac{\left( a+1\right) ^{\frac{j}{%
2}-k}}{2^{\frac{j+1}{2}+k}}  \label{b2}
\end{equation}%
otherwise.
\end{proposition}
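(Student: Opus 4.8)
The plan is to apply the Cauchy integral formula \eqref{ci} once more, but this time with a circle whose radius is tuned to the evaluation point $t$. Write $f_j$ in the form used in Proposition~\ref{p3}, namely $f_j(w)=\dfrac{(1+w)^{j-1}}{(a+w)^{j}}\left(\dfrac{a+1}{2}\right)^{j}$, and take $\Gamma$ to be the circle centered at $t$ of radius $\varrho$ with $0<\varrho<a+t$ (so that the pole $-a$ stays outside). Then
\[
|f_j^{(2k)}(t)|\le (2k)!\,\varrho^{-2k}\max_{|w-t|=\varrho}|f_j(w)|.
\]
The natural choice is $\varrho$ proportional to the distance from $t$ to the two critical points, i.e. balancing $1+t$ (the zero) and $a+t$ (the pole). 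A convenient split is $\varrho=\varrho_1+\varrho_2$ with $\varrho_1\le 1+t$ controlling the numerator and $\varrho_2\le a+t$ controlling the denominator, so that on the circle one has $|1+w|\le 1+t+\varrho$ and $|a+w|\ge a+t-\varrho$. Choosing the geometrically symmetric allocation — $\varrho_1=\frac{1}{2}(1+t)/(?)$ type weights — and in particular the ``square-root'' split that makes $\varrho_1\sim\sqrt{1+t}$ and $\varrho_2\sim\sqrt{a+t}$ up to the constant $\sqrt2$ and $\sqrt{a+1}$ appearing in \eqref{bndd}, should reproduce exactly the exponents $\frac{j-1}{2}-k$ on $(1+t)$ and $\frac{j}{2}+k$ on $(a+t)$: the numerator contributes $(1+t+\varrho)^{j-1}\le (1+t)^{j-1}\cdot(\text{const})^{?}$ and the $\varrho^{-2k}$ factor together with the denominator bound $(a+t-\varrho)^{-j}$ yields the claimed powers after the elementary estimate $(1+t)^{j-1}(a+t-\varrho)^{-j}\varrho^{-2k}\le C\,(1+t)^{\frac{j-1}{2}-k}(a+t)^{-\frac{j}{2}-k}$. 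This establishes \eqref{bndd}.

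For the bound \eqref{b1}, I would maximize the right-hand side of \eqref{bndd} over $t\in[-1,1]$. Set $u=1+t\in[0,2]$; the function to maximize is $u^{\frac{j-1}{2}-k}(a-1+u)^{-(\frac{j}{2}+k)}$. Differentiating in $u$ and setting the logarithmic derivative to zero gives the interior critical point
\[
u^\ast=(a-1)\,\frac{\frac{j-1}{2}-k}{\frac{j}{2}+k-(\frac{j-1}{2}-k)}=(a-1)\,\frac{j-2k-1}{4k+1},
\]
so that $a-1+u^\ast=(a-1)\,\frac{j+2k}{4k+1}$. Substituting back produces the factor
\[
\frac{\left(\frac{j-2k-1}{4k+1}\right)^{\frac{j-1}{2}-k}}{\left(\frac{j+2k}{4k+1}\right)^{\frac{j}{2}+k}}(a-1)^{\frac{j-1}{2}-k-\frac{j}{2}-k}=\frac{\left(\frac{j-2k-1}{4k+1}\right)^{\frac{j-1}{2}-k}}{\left(\frac{j+2k}{4k+1}\right)^{\frac{j}{2}+k}}(a-1)^{-2k-\frac12},
\]
which, combined with the constant $\frac{\sqrt{a+1}+\sqrt2}{a-1}$ from \eqref{bndd}, gives precisely \eqref{b1} — provided this critical point lies in the admissible range $u^\ast\le 2$. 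The condition $u^\ast\le 2$ is exactly $(a-1)\frac{j-2k-1}{4k+1}\le 2$, i.e. $a\le 1+\frac{2(4k+1)}{j-2k-1}=\frac{j+6k+1}{j-2k-1}$, which is \eqref{as}. When \eqref{as} fails, the maximand is increasing on all of $[0,2]$, so the maximum is attained at the endpoint $u=2$ (i.e. $t=1$); plugging $1+t=2$ and $a+t=a+1$ into \eqref{bndd} and absorbing the $2^{?}$ factors gives \eqref{b2}.

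The main obstacle is the bookkeeping in the first step: choosing the radius $\varrho$ (and, if one uses a two-piece or angle-dependent contour, the split between numerator- and denominator-control) so that the elementary inequality $|1+w|\le 1+t+\varrho$, $|a+w|\ge a+t-\varrho$ collapses cleanly to the half-integer exponents $\frac{j-1}{2}-k$ and $\frac{j}{2}+k$ with the precise constant $\sqrt{a+1}+\sqrt2$ rather than something weaker. One must check that the resulting $\varrho$ indeed satisfies $0<\varrho<a+t$ uniformly in $t\in[-1,1]$ under the standing hypothesis $j\ge 2k+2$ (which guarantees the exponent $\frac{j-1}{2}-k\ge 0$, so that $(1+t)^{\frac{j-1}{2}-k}$ is a genuine, non-singular factor and the optimization in $u$ is over a nonnegative power). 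Once the contour is correctly calibrated, the remaining two steps — the single-variable maximization and the endpoint case — are routine calculus, and the case split is governed exactly by \eqref{as}.
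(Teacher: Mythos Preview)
Your overall plan --- Cauchy's formula on a $t$-dependent circle, then a one-variable maximization of the resulting pointwise bound --- is exactly the paper's strategy, and your optimization in $u=1+t$ to derive \eqref{b1}, \eqref{as}, and the endpoint case \eqref{b2} is correct and matches the paper line for line.

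The gap is in the first step, and it is a real one. The crude estimates $|1+w|\le 1+t+\varrho$ and $|a+w|\ge a+t-\varrho$ that you propose cannot produce \eqref{bndd}: with any radius of the size you are aiming at (say $\varrho=\sqrt{(1+t)(a+t)}$), these triangle-inequality bounds give
\[
\frac{(1+t+\varrho)^{j-1}}{(a+t-\varrho)^{j}}
=\frac{(1+t)^{\frac{j-1}{2}}}{(a+t)^{\frac{j}{2}}}\cdot
\frac{\bigl(\sqrt{1+t}+\sqrt{a+t}\bigr)^{j-1}}{\bigl(\sqrt{a+t}-\sqrt{1+t}\bigr)^{j}},
\]
and the last factor grows geometrically in $j$, destroying the bound. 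No additive split $\varrho=\varrho_1+\varrho_2$ repairs this, because on the worst arc of the circle the numerator and denominator losses reinforce rather than cancel.

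What the paper exploits is an \emph{identity}, not an inequality: with the geometric-mean radius $\varrho=\sqrt{(1+t)(a+t)}$ one has, for every $\theta$,
\[
\left|\frac{1+t+\varrho e^{i\theta}}{a+t+\varrho e^{i\theta}}\right|^{2}
=\frac{(1+t)^2+2(1+t)\varrho\cos\theta+\varrho^2}{(a+t)^2+2(a+t)\varrho\cos\theta+\varrho^2}
=\frac{1+t}{a+t},
\]
because $\varrho^2=(1+t)(a+t)$ makes the bracketed factors in numerator and denominator identical. Thus the $j$-th power contributes exactly $\bigl((1+t)/(a+t)\bigr)^{j/2}$ with no loss in $j$; the only $\theta$-dependent piece left is the single factor $|1+t+\varrho e^{i\theta}|^{-1}$, whose maximum over $\theta$ is $\bigl(\varrho-(1+t)\bigr)^{-1}=\dfrac{\sqrt{a+t}+\sqrt{1+t}}{\sqrt{1+t}\,(a-1)}\le \dfrac{\sqrt{a+1}+\sqrt{2}}{\sqrt{1+t}\,(a-1)}$. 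Combining this with $\varrho^{-2k}=\bigl((1+t)(a+t)\bigr)^{-k}$ gives \eqref{bndd} directly, and the hypothesis $j\ge 2k+2$ is used only to extend the bound by continuity to $t=-1$. Once you insert this choice of $\varrho$ and the modulus identity, your argument goes through as written.
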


\begin{proof}
In (\ref{ci}) we take $\Gamma $ as the circle centered at $t$ and of radius $%
\rho $ such that $1+t<\rho <t+a$, that is, we use the substitution $w=t+\rho
e^{i\theta }$. We obtain%
\begin{equation}
\left\vert f_{j}^{(2k)}(t)\right\vert \leq \left( 2k\right) !\frac{1}{\rho
^{2k}}\max_{[0,2\pi ]}\left\vert f_{j}(t+\rho e^{i\theta })\right\vert .
\label{bnd}
\end{equation}%
For $t>-1$ we define $\rho =\sqrt{(t+a)(1+t)}$, so that%
\begin{eqnarray*}
\left\vert f_{j}(t+\rho e^{i\theta })\right\vert &=&\left\vert \frac{1}{%
1+t+\rho e^{i\theta }}\right\vert \left\vert \frac{1+t+\rho e^{i\theta }}{%
a+t+\rho e^{i\theta }}\right\vert ^{j}\left( \frac{a+1}{2}\right) ^{j} \\
&=&\left\vert \frac{1}{1+t+\sqrt{(t+a)(1+t)}e^{i\theta }}\right\vert \times
\\
&&\left\vert \frac{1+t+\sqrt{(t+a)(1+t)}e^{i\theta }}{a+t+\sqrt{(t+a)(1+t)}%
e^{i\theta }}\right\vert ^{j}\left( \frac{a+1}{2}\right) ^{j} \\
&=&\left\vert \frac{1}{1+t+\sqrt{(t+a)(1+t)}e^{i\theta }}\right\vert \left(
\frac{1+t}{a+t}\right) ^{\frac{j}{2}}\left( \frac{a+1}{2}\right) ^{j},
\end{eqnarray*}%
and hence%
\begin{eqnarray*}
\max_{\lbrack 0,2\pi ]}\left\vert f_{j}(t+\rho e^{i\theta })\right\vert &=&%
\frac{1}{\sqrt{(t+a)(1+t)}-(1+t)}\left( \frac{1+t}{a+t}\right) ^{\frac{j}{2}%
}\left( \frac{a+1}{2}\right) ^{j} \\
&=&\frac{\sqrt{(t+a)}+\sqrt{(1+t)}}{\sqrt{(1+t)}(a-1)}\left( \frac{1+t}{a+t}%
\right) ^{\frac{j}{2}}\left( \frac{a+1}{2}\right) ^{j}.
\end{eqnarray*}%
By (\ref{bnd}) we obtain the bound (\ref{bndd}) for each $j$ and $k$, for $%
t>-1$. By continuity, (\ref{bndd}) holds for $t\in \lbrack -1,1]$ if $j\geq
2k+2$.\newline

Now, we observe that the maximum with respect to $t$ of the function%
\begin{equation*}
\frac{\left( 1+t\right) ^{\frac{j-1}{2}-k}}{(a+t)^{\frac{j}{2}+k}}
\end{equation*}%
is attained at
$$t^{\ast }=\frac{a(j-2k-1)-(j+2k)}{4k+1}\geq -1. $$
Moreover $t^{\ast }\leq 1$ for $a$ verifying (\ref{as}).
Substituting $t^{\ast }$ in (\ref{bndd}) leads to (\ref{b1}). If $t^{\ast
}>1 $ then the maximum of (\ref{bndd}) is reached at $t=1$ and hence we
obtain (\ref{b2})$.$
\end{proof}

In order to derive (\ref{b1}) and (\ref{b2}) we have assumed $\tau $ to be a
priori fixed. Now, using these bounds, we look for the value of $\tau $
which minimize $\left\Vert f_{j}^{(2k)}\right\Vert $ for a given $j$. For $%
j\geq 2k+3$, the minimization of the term
\begin{equation*}
\left( a+1\right) ^{j}\left( a-1\right) ^{-2k-\frac{3}{2}}
\end{equation*}%
in (\ref{b1}) leads to%
\begin{equation*}
a^{(1)}=\frac{2j+4k+3}{2j-4k-3},
\end{equation*}%
which satisfies (\ref{as}). Consequently, by (\ref{atau}) we obtain%
\begin{equation}
\tau ^{(1)}=\frac{4k+3}{2j}.  \label{tau}
\end{equation}%
On the other side, for the same $j\geq 2k+3,$ the minimization of the term%
\begin{equation*}
\frac{\sqrt{a+1}+\sqrt{2}}{a-1}\left( a+1\right) ^{\frac{j}{2}-k}
\end{equation*}%
in (\ref{b2}) leads to a value%
\begin{equation*}
a^{\ast }\leq \frac{j-2k+2}{j-2k-2},
\end{equation*}%
which also satisfies (\ref{as}) and hence does not fulfill the requirement
of (\ref{b2}). For $a>a^{\ast }$ the bound (\ref{b2}) is growing with $a$
and consequently its minimum is attained just for%
\begin{equation*}
a^{(2)}=\frac{j+6k+1}{j-2k-1}.
\end{equation*}%
Using this value in (\ref{b2}) leads to a bound that is coarser than the one
obtained by replacing $a^{(1)}$ in (\ref{b1}). For this reason, with respect
to our estimates, $\tau $ given by (\ref{tau}) represents the optimal value
for the computation of (\ref{fjj}) and consequently of $\omega _{j}^{(p)}$.

The following theorem summarizes the results obtained. The proof follows
straightfully from (\ref{Ej}), Theorem \ref{t1}, Lemma \ref{l1}, and
Propositions \ref{p3}, \ref{p2}.

\begin{theorem}
Let $0<\tau <1\ $and
\begin{equation*}
a:=\frac{1+\tau}{1-\tau }.
\end{equation*}%
Then%
$$\left\vert E_{j}\right\vert \leq 2^{3-2k}\sin (\alpha \pi )\tau ^{\alpha
}\Psi (a,j,k). 
$$
where%
\begin{equation*}
\Psi (a,j,k):=\left\{
\begin{array}{c}
\frac{\sqrt{a}}{\left( \sqrt{a}-1\right) ^{2k+2}}\left( \frac{a+1}{2\sqrt{a}}%
\right) ^{j}, \quad j\leq 2k+1, \\
\left( \sqrt{a+1}+\sqrt{2}\right) \frac{\left( \frac{j-2k-1}{4k+1}\right) ^{%
\frac{j-1}{2}-k}}{\left( \frac{j+2k}{4k+1}\right) ^{\frac{j}{2}+k}}\left(
\frac{a+1}{2}\right) ^{j}\left( a-1\right) ^{-2k-\frac{3}{2}}, \\
2k+2\leq j\leq \frac{6k+1+a(2k+1)}{a-1}, \\
\frac{\sqrt{a+1}+\sqrt{2}}{a-1}\frac{\left( a+1\right) ^{\frac{j}{2}-k}}{2^{%
\frac{j+1}{2}+k}},\quad j\geq \max \left( 2k+2,\frac{6k+1+a(2k+1)}{a-1}%
\right).%
\end{array}%
\right.
\end{equation*}%
For $\tau =\tau ^{(1)}$ the corresponding expression of $\Psi (a,j,k)$ is
minimized for $j\geq 2k+3$. 
\end{theorem}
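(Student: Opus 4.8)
The plan is to obtain the stated estimate by substitution, assembling the ingredients already established. By~(\ref{Ej}) it suffices to bound $|r_j|$ and $|r_{j-1}|$, and by the discussion following~(\ref{rj}) the scalar $r_j$ is precisely the error of the $k$-point Gauss--Jacobi rule on the integral~(\ref{fjj}). Since, by Proposition~\ref{p1} and~(\ref{fl}) with $p=1$, the pole of $f_j$ in~(\ref{fj}) sits at $-a<-1$ whenever $\tau\in(0,1)$, the function $f_j$ is analytic on a neighbourhood of $[-1,1]$; hence Theorem~\ref{t1} applies with $\alpha_1=\alpha-1$, $\alpha_2=-\alpha$ and gives $r_j=C_{k,\alpha}f_j^{(2k)}(\xi_j)$ for some $\xi_j\in(-1,1)$, so that $|r_j|\le C_{k,\alpha}\max_{[-1,1]}|f_j^{(2k)}|$. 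Lemma~\ref{l1} then supplies the constant $C_{k,\alpha}(2k)!\sim\pi\,2^{1-2k}$, whose factor $(2k)!$ cancels the $(2k)!$ present in all the derivative bounds below.

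Next I would split on the size of $j$ relative to $k$ and insert Propositions~\ref{p3} and~\ref{p2}. For $j\le 2k+1$ (hence $j-1\le 2k$) Proposition~\ref{p3} bounds $\max_{[-1,1]}|f_j^{(2k)}|$ by $(2k)!$ times the first branch of $\Psi(a,j,k)$. For $j\ge 2k+2$ Proposition~\ref{p2} applies; rewriting the admissibility condition~(\ref{as}) as $j\le(6k+1+a(2k+1))/(a-1)$ shows that on the range $2k+2\le j\le(6k+1+a(2k+1))/(a-1)$ the interior maximiser $t^{\ast}$ lies in $[-1,1]$ and~(\ref{b1}) gives $(2k)!$ times the second branch of $\Psi$, while beyond that range $t^{\ast}>1$, the maximum is attained at $t=1$, and~(\ref{b2}) gives $(2k)!$ times the third branch. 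As on each range the relevant branch of $\Psi$ is increasing in $j$, the term $|r_{j-1}|$ is dominated by the bound on $|r_j|$, and feeding everything into~(\ref{Ej}) yields
$$|E_j|\le\frac{2\sin(\alpha\pi)\tau^{\alpha}}{\pi}\bigl(|r_j|+|r_{j-1}|\bigr)\le\frac{2\sin(\alpha\pi)\tau^{\alpha}}{\pi}\cdot 2\,C_{k,\alpha}(2k)!\,\Psi(a,j,k)=2^{3-2k}\sin(\alpha\pi)\tau^{\alpha}\,\Psi(a,j,k),$$
where the last equality uses $C_{k,\alpha}(2k)!\sim\pi\,2^{1-2k}$ together with $2\cdot 2\cdot 2^{1-2k}=2^{3-2k}$.

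For the final assertion I would reuse the optimisation already carried out before the statement: for $j\ge 2k+3$ the factor $(a+1)^j(a-1)^{-2k-3/2}$ of the second branch is minimised at $a^{(1)}=(2j+4k+3)/(2j-4k-3)$, which corresponds via~(\ref{atau}) to $\tau^{(1)}=(4k+3)/(2j)$; one checks that $a^{(1)}$ satisfies~(\ref{as}), so this choice of $\tau$ does place $j$ in the middle regime, and that the minimum attainable from the third branch is coarser. Hence $\tau=\tau^{(1)}$ minimises $\Psi(a,j,k)$ for $j\ge 2k+3$.

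The only genuinely delicate point is the bookkeeping at the regime boundaries: matching the branch used for index $j-1$ with that used for index $j$ near $j=2k+1,2k+2$ and across the threshold $(6k+1+a(2k+1))/(a-1)$, and checking that $a^{(1)}$ falls where~(\ref{b1}), not~(\ref{b2}), controls the error. Both are routine monotonicity checks once the defining inequalities are written out, and neither alters the form of the final bound.
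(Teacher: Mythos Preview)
Your proposal is correct and follows essentially the same route as the paper, which merely states that the result ``follows straightfully from~(\ref{Ej}), Theorem~\ref{t1}, Lemma~\ref{l1}, and Propositions~\ref{p3},~\ref{p2}'' without further detail. You are in fact more explicit than the paper about two points it glosses over: the monotonicity in $j$ needed so that the $|r_{j-1}|$ contribution in~(\ref{Ej}) is absorbed into the bound for $|r_j|$, and the fact that Lemma~\ref{l1} gives only an asymptotic equivalence $C_{k,\alpha}(2k)!\sim\pi\,2^{1-2k}$, so that the constant $2^{3-2k}$ in the stated bound is really asymptotic rather than exact.
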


\subsection{Numerical experiments}

\label{sec4}
% In order to appreciate the potential of the methods just described, below we
% present some numerical experiments. In the previous sections we have denoted
% by $k$ either the number of abscissas in the quadrature formula for (\ref%
% {rei}), or the degree of the Pad\`{e} approximation (\ref{bu})-(\ref{rp}).
% Indeed, in both case we finally obtain an $m$-step formula with $m=kp$. For
% this reason, in this section we simply say that a formula has degree $k$.

As already mentioned, the aim of the whole analysis was to have indications
about the choice of the parameter $\tau $ with respect to the degree $k$ of
the formula and the dimension of the problem $N$. Unfortunately, the
definition of $\tau $ as in (\ref{tau}) depends on $j$, while we need a
value which is as good as possible for each $1\leq j\leq N$. In this sense,
the idea, confirmed by the forthcoming experiments, is to use $\tau ^{(1)}$
with $j=N/2$, that is, focusing the attention on the middle of the interval $%
[0,N]$. This leads to a choice of $\tau $ around the value $4k/N$. We remark
that the previous analysis was restricted to the case of $p=1,$ because of
the difficulties in dealing with the functions $f_{j}$ for $p>1$ (cf.
Proposition~\ref{p1}).

Numerically, we can proceed as follows. If we define
$$
\tau ^{\ast }=\mbox{arg}\min_{\tau }{\mathcal{E}}(\tau ),\qquad {\mathcal{E}}%
(\tau ):=\left\Vert A_{p}^{\alpha -1}-\widetilde{R}_{k}(A_{p})\right\Vert
_{\infty },  
$$
then, in principle, $\tau ^{\ast }=\tau ^{\ast }(\alpha ,k,N,p).$ However,
the numerical experiments done by using the \texttt{Matlab} optimization
routine \texttt{fminsearch} indicate that the dependence on $\alpha $ is
negligible with respect to the others. In particular, there is numerical
evidence that ${\mathcal{E}}(\tau ^{\ast })\approx {\mathcal{E}}(\hat{\tau})$
where
\begin{equation}
\hat{\tau}=\frac{(7+p)}{2N}k.  \label{tauhat}
\end{equation}%
In Figures~\ref{taub1}-\ref{taub3} we report the values of ${\mathcal{E}}%
(\tau ^{\ast })$ and ${\mathcal{E}}(\hat{\tau})$ for $p=1,3,$ respectively.
We recall that the corresponding sets of coefficients $\left\{
a_{0},a_{1},\ldots ,a_{p}\right\} $ in (\ref{ap}) are given by
\begin{eqnarray*}
p &=&1:\quad \left\{ 1,-1\right\} , \\
p &=&3:\quad \left\{ 11/6,-3,3/2,-1/3\right\} .
\end{eqnarray*}%
As one can see, all the curves are approximatively overlapped. A
\textquotedblleft quasi\textquotedblright\ optimal approximation of $%
A_{p}^{\alpha -1}$ can be therefore obtained by using the very simple
formula in (\ref{tauhat}) for choosing $\tau .$ Moreover, it is important to
remark that such approximations are surely satisfactory even with $k\ll N.$

\begin{figure}[h]
\centerline{\includegraphics[width=10cm,height=5.5cm]{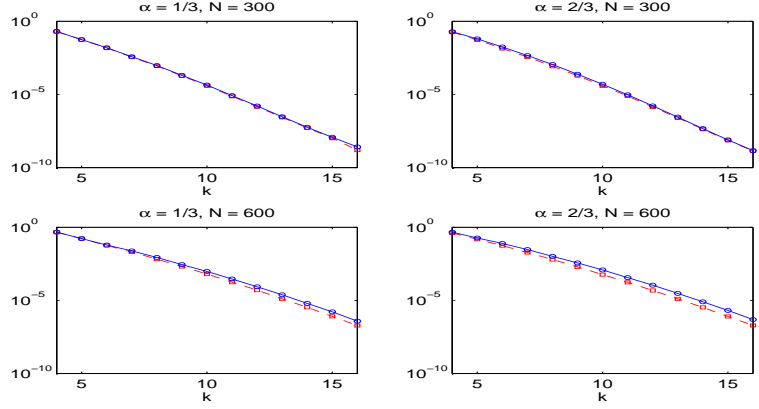}}
\caption{Error behavior of the Gauss-Jacobi rule for the approximation of $%
A_{1}^{\protect\alpha -1}$ for $\protect\tau =\protect\tau ^{\ast }$ (dashed
line) and $\protect\tau =\hat{\protect\tau}=4k/N$ (solid line). }
\label{taub1}
\end{figure}

\begin{figure}[h]
\centerline{\includegraphics[width=10cm,height=5.5cm]{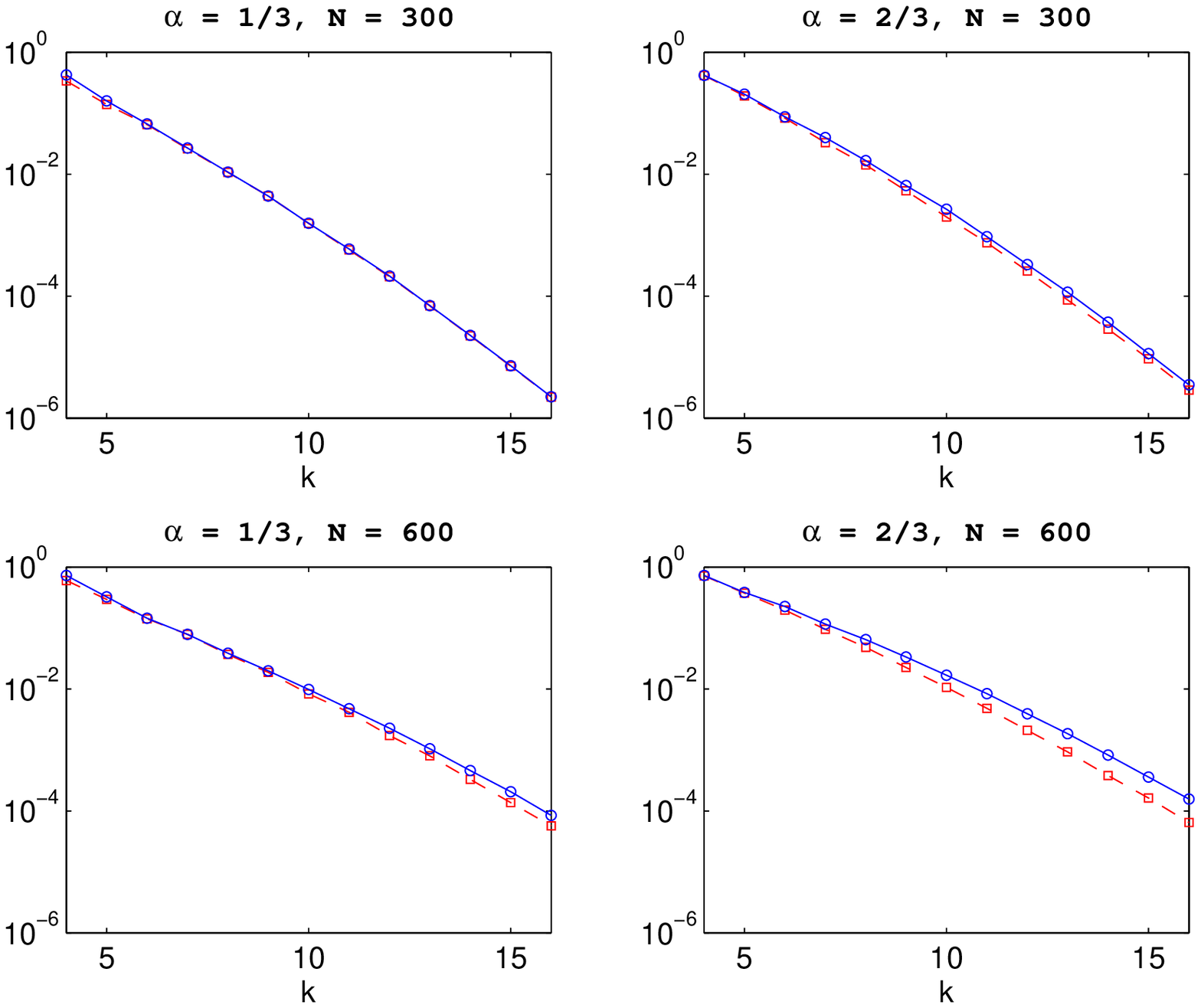}}
\caption{Error behavior of the Gauss-Jacobi rule for the approximation of $%
A_{3}^{\protect\alpha -1}$ for $\protect\tau =\protect\tau ^{\ast }$ (dashed
line) and $\protect\tau =\hat{\protect\tau}=5k/N$ (solid line).}
\label{taub3}
\end{figure}

The previous results are all related to the overall error in the
approximation of $A_{p}^{\alpha }.$ Considering that our final goal is the
use of such approximation for the solution of FDEs, it is important to
inspect also the componentwise error. As an example, in Figure \ref{figcomp}%
, we report such errors, i.e., the values of $E_{j}$ defined in (\ref{Ej0}),
in the case of $N=400,$ $p=1,$ $\tau =\hat{\tau}$ for $\alpha =0.3,0.5,0.7$,
and different values of $k$. We also consider the componentwise errors of
the polynomial approximation of the generating function obtained by
truncating its Taylor series, with memory length equal to 16. Obviously this
is equivalent to approximate with 0 the coefficients $\omega _{i}^{(p)}$ of (%
\ref{tay}), for $i>16$, so that the error is just $\left\vert \omega
_{i}^{(p)}\right\vert $. The competitiveness of the rational approach is
undeniable.

\begin{figure}[tbh]
\centerline{\includegraphics[width=10cm,height=4cm]{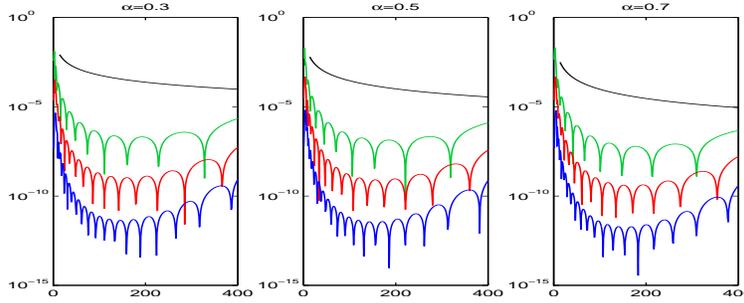}}
\caption{Componentwise error of the Gauss-Jacobi rational approximation with
memory length $k=6,9,12$, and the polynomial approximation with $k=16$.}
\label{figcomp}
\end{figure}

\section{The solution of FDEs}

\label{sec5} In this section we discuss the use of the described
approximation of $A_{1}^{\alpha }$ for getting a $k$-step method that
simulates the FBDF of order 1. The discrete problem provided by the latter
method applied for solving (\ref{fde}) can be written in matrix form as
follows
\begin{equation}
\left( A_{1}^{\alpha }\otimes I_{s}\right) \left( Y-{\mathds{1}}\otimes
y_{0}\right) =h^{\alpha }G(Y),  \label{pdfbdf1}
\end{equation}%
where $s$ is the dimension of the FDE, $I_{s}$ is the identity matrix of
order $s,$ $y_{0}\in \mathbb{R}^{s}$ represents the initial value, $%
h=(T-t_{0})/N$ is the stepsize, ${\mathds{1}}=\left( 1,1,\ldots ,1\right)
^{T}\in \mathbb{R}^{N},$
\begin{equation*}
Y=\left(
\begin{array}{c}
y_{1} \\
y_{2} \\
\vdots \\
y_{N}%
\end{array}%
\right) \approx \left(
\begin{array}{c}
y(t_{1}) \\
y(t_{2}) \\
\vdots \\
y(t_{N})%
\end{array}%
\right) ,\qquad G(Y)=\left(
\begin{array}{c}
g(t_{1},y_{1}) \\
g(t_{2},y_{2}) \\
\vdots \\
g(t_{N},y_{N})%
\end{array}%
\right) \equiv \left(
\begin{array}{c}
g_{1} \\
g_{2} \\
\vdots \\
g_{N}%
\end{array}%
\right) .
\end{equation*}%
As described in Section~\ref{sec2}, the use of a $k$-point Gauss-Jacobi rule
for approximating (\ref{nint}) leads to
$$
A_{1}^{\alpha }\approx \left(
\begin{array}{ccccc}
\beta _{0} & 0 &  &  & 0 \\
\vdots & \beta _{0} & 0 &  &  \\
\beta _{k} & \vdots & \ddots & 0 &  \\
0 & \ddots &  & \ddots & 0 \\
& 0 & \beta _{k} & \cdots & \beta _{0}%
\end{array}%
\right) ^{-1}\left(
\begin{array}{ccccc}
\alpha _{0} & 0 &  &  & 0 \\
\vdots & \alpha _{0} & 0 &  &  \\
\alpha _{k} & \vdots & \ddots & 0 &  \\
0 & \ddots &  & \ddots & 0 \\
& 0 & \alpha _{k} & \cdots & \alpha _{0}%
\end{array}%
\right) \equiv B^{-1}C. 
$$
Here, the coefficients $\left\{ \alpha _{j}\right\} _{j=0}^{k}$ and $\left\{
\beta _{j}\right\} _{j=0}^{k}$ are related to the rational approximation
through the formulas (\ref{ptqt})--(\ref{qm}), with $m=k$ since $p=1,$
\begin{equation}
p_{k}(\zeta )=(1-\zeta )\widetilde{p}_{k-1}(1-\zeta )=\sum_{j=0}^{k}\alpha
_{j}\zeta ^{j},\qquad q_{k}(\zeta )=\widetilde{q}_{k}(1-\zeta
)=\sum_{j=0}^{k}\beta _{j}\zeta ^{j}.  \label{pkqk}
\end{equation}%
If we replace $A_1^{\alpha }$ by $B^{-1}C$ in (\ref{pdfbdf1}) and we
multiply both side of the resulting equation from the left by $B\otimes
I_{s},$ we obtain
\begin{equation}
\left( C\otimes I_{s}\right) Y-C{\mathds{1}}\otimes y_{0}=h^{\alpha
}(B\otimes I_{s})G(Y),  \label{pdCB}
\end{equation}%
where now $Y$ represents the numerical solution provided by the $k$-step
method. In fact, considering that $(C{\mathds{1}})_{n}=0$ for each $%
n=k+1,\ldots ,N,$ since
\begin{equation}
p_{k}(1)=\sum_{j=0}^{k}\alpha _{j}=0,  \label{precons}
\end{equation}%
the discrete problem (\ref{pdCB}) simplifies to
\begin{eqnarray}
\sum_{j=0}^{n-1}\alpha _{j}\left( y_{n-j}-y_{0}\right) &=&h^{\alpha
}\sum_{j=0}^{n-1}\beta _{j}g_{n-j},\qquad n=1,\ldots ,k,  \label{start} \\
\sum_{j=0}^{k}\alpha _{j}y_{n-j} &=&h^{\alpha }\sum_{j=0}^{k}\beta
_{j}g_{n-j},\qquad n=k+1,\ldots ,N.  \label{poi}
\end{eqnarray}%
Indeed, the equations in (\ref{start}) allow to get an approximation of the
solution over the first $k$ meshpoints which are then used as starting
values for the $k$-step recursion in (\ref{poi}).

\begin{remark}\label{remcost}
From (\ref{precons})-(\ref{poi}) follows that the method reproduces exactly
constant solutions, i.e. it is exact if $g(t,y(t))\equiv 0.$
\end{remark}

As it happens in the case of ODEs, a localization of the zeros of the
characteristic polynomials of the $k$-step method in (\ref{pkqk}) is
required in order to study its stability properties. Clearly, such
polynomials depend on the parameter $\tau ,$ i.e. $p_{k}(\zeta )\equiv
p_{k}(\zeta; \tau )$ and $q_{k}(\zeta )\equiv q_{k}(\zeta; \tau )$ since
this dependence occurs in $\widetilde{p}_{k-1},\widetilde{q}_{k}.$ The
method is therefore based on the following rational approximation
\begin{equation}
(1-\zeta )^{\alpha -1}\approx \frac{\widetilde{p}_{k-1}(1-\zeta ;\tau )}{%
\widetilde{q}_{k}(1-\zeta ;\tau )}\equiv \widetilde{R}_{k}(1-\zeta ;\tau ).
\label{rtilde}
\end{equation}%

\begin{theorem}
\label{tipipc} For each $\tau \in (0,1],$ the adjoint of the characteristic
polynomials of the $k$-step method, i.e. $\zeta^kp_k(\zeta^{-1};\tau)$ and $%
\zeta^kq_k(\zeta^{-1};\tau),$ are a Von Neumann and a Schur polynomial,
respectively.
\end{theorem}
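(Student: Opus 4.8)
The plan is to exploit the connection, established in Remark~\ref{rpade}, between the rational approximation $\widetilde R_k(1-\zeta;\tau)$ and the $(k-1,k)$ Pad\'e approximant of $(1-\zeta)^{\alpha-1}$ with expansion point $\zeta=1-\tau$. Recall that a polynomial is a \emph{Von Neumann} polynomial if all its roots lie in the closed unit disk $|\zeta|\le 1$, and a \emph{Schur} polynomial if all its roots lie in the open unit disk $|\zeta|<1$. Equivalently, the adjoint $\zeta^k p_k(\zeta^{-1};\tau)$ is Von Neumann precisely when all roots of $p_k(\zeta;\tau)$ lie outside the open unit disk, i.e. in $|\zeta|\ge 1$, and the analogous statement holds for $q_k$ with strict inequality. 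So the task reduces to locating the zeros of $p_k(\zeta;\tau)$ and $q_k(\zeta;\tau)$ — equivalently, by (\ref{pkqk}), the zeros of $\widetilde p_{k-1}(z)$ and $\widetilde q_k(z)$ under the affine change $z=1-\zeta$, so that $|\zeta|\ge 1$ corresponds to $\mathrm{Re}(z)\le 1$ when $\tau\le 1$ — actually one must track the map carefully: $\zeta=1-z$ sends the half-plane geometry appropriately, but the cleaner route is to use the $\tau$-shifted Pad\'e structure directly.

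First I would recall the classical fact, due to the theory of Pad\'e approximation of Stieltjes/Cauchy-type functions, that the function $(1-\zeta)^{\alpha-1}$ for $0<\alpha<1$ is a Stieltjes function of $\zeta$ in a suitable sense (its derivatives have fixed sign, it has an integral representation with positive measure on $[1,\infty)$), and hence all diagonal and near-diagonal Pad\'e approximants have the property that the numerator and denominator are \emph{real}, have \emph{simple real zeros}, and these zeros are \emph{interlaced} and all lie in $[1,\infty)$ — the branch cut of $(1-\zeta)^{\alpha-1}$. Concretely, the poles of $\widetilde R_k$ (zeros of $\widetilde q_k$) and the zeros of $\widetilde p_{k-1}$ all lie in $(1,\infty)$ when the expansion point is $0$; shifting the expansion point to $\zeta=1-\tau$ with $\tau\in(0,1]$ moves the expansion point to the right but keeps it strictly left of the cut at $\zeta=1$, so the zeros of $p_k(\zeta;\tau)$ and $q_k(\zeta;\tau)$ remain real and in $[1,\infty)$. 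If $z_0\ge 1$ is such a zero, then the corresponding root of the adjoint polynomial is $\zeta=1/z_0\in(0,1]$, which lies in the closed unit disk (in the open disk for the $q_k$ side, since there the zeros are strictly $>1$, giving $|\zeta|<1$). This is exactly the Von Neumann / Schur dichotomy claimed. Alternatively, and perhaps more self-containedly, I would verify this through the Gauss–Jacobi quadrature structure of Section~\ref{sec2}: by (\ref{rapp})--(\ref{ptqt}) with $p=1$, $\widetilde R_k(z)=\sum_{j=1}^k\gamma_j(\eta_j+z)^{-1}$ with positive weights $\gamma_j>0$ (Gauss–Jacobi weights) and nodes $\eta_j$ that are positive shifts coming from the substitution (\ref{js}); writing this over a common denominator shows $\widetilde q_k(z)=\prod_j(\eta_j+z)$ has all its zeros at $z=-\eta_j<0$, hence (translating $z=1-\zeta$) $q_k(\zeta;\tau)$ has zeros at $\zeta=1+\eta_j>1$, and one checks $\eta_j$ corresponds to a value strictly exceeding $0$ so $\zeta>1$, giving $|\zeta^{-1}|<1$. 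The numerator zeros are then handled by the standard fact that a sum $\sum\gamma_j/(\eta_j+z)$ with $\gamma_j>0$ and distinct $\eta_j$ has all its zeros real and interlaced with the $-\eta_j$, hence also negative, so $p_k(\zeta;\tau)$ has all zeros in $(1,\infty)$ as well, giving the Von Neumann conclusion (with the possible boundary case $|\zeta|=1$ accommodated by allowing $\zeta^{-1}$ to touch the unit circle).

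Concretely I would organize the write-up as: (i) reduce, via (\ref{pkqk}) and the substitution $z=1-\zeta$, the claim to: all zeros of $\widetilde p_{k-1}(z;\tau)$ and $\widetilde q_k(z;\tau)$ have $\mathrm{Re}(z)<0$ (in fact lie in $(-\infty,0)$); (ii) use the partial-fraction form $\widetilde R_k(z)=\sum_{j=1}^k \gamma_j(\eta_j+z)^{-1}$ coming from the $k$-point Gauss–Jacobi rule applied to (\ref{nint}), noting $\gamma_j>0$ and $\eta_j>0$ — here $\eta_j=\tau\frac{1-t_j}{1+t_j}\cdot$(something positive), with $t_j\in(-1,1)$ the Jacobi nodes, so indeed $\eta_j>0$; (iii) observe $\widetilde q_k(z)=\mathrm{const}\cdot\prod_{j=1}^k(\eta_j+z)$ has zeros exactly at $-\eta_j<0$ — done for $q_k$; (iv) for the numerator, invoke the interlacing lemma for Cauchy sums / Markov's theorem: the zeros of $\sum_j\gamma_j/(\eta_j+z)$ with $\gamma_j>0$ are real, simple, and strictly interlace the points $-\eta_j$, hence all lie in $(-\max\eta_j,-\min\eta_j)\subset(-\infty,0)$; (v) translate back: zeros of $p_k,q_k$ lie in $(1,\infty)$, so the reciprocals (roots of the adjoints) lie in $(0,1)$, a fortiori in $\{|\zeta|\le 1\}$ and $\{|\zeta|<1\}$ respectively, which is the definition of Von Neumann and Schur.

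The main obstacle is step (ii)/(iv): one must be certain that the Gauss–Jacobi nodes and weights genuinely produce a partial-fraction decomposition with \emph{all} $\gamma_j>0$ and \emph{all} $\eta_j>0$ (so that the Cauchy-sum interlacing argument applies), and that the affine substitution $z=a_0+a_1\zeta=1-\zeta$ together with the $\tau$-dependence does not spoil the sign structure for the whole range $\tau\in(0,1]$. The positivity of the $\gamma_j$ is the positivity of Gaussian quadrature weights, which is classical; the positivity of the $\eta_j$ follows from (\ref{js}) and $t_j\in(-1,1)$, $\tau>0$; the only delicate point is the boundary case $\tau=1$ and the endpoint behavior $\eta_j\to 0$ or $\eta_j\to\infty$ as $t_j\to\pm 1$, which is why the statement is stated with $|\zeta|\le 1$ (closed disk, Von Neumann) rather than the strict Schur property for $p_k$ — when $\tau=1$ one node can degenerate, pushing a zero of $p_k$ toward $\zeta=1$ on the unit circle, consistent with Remark~\ref{rpade} identifying $\tau=1$ with the classical Pad\'e approximant at $\zeta=0$. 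I would be careful to phrase the argument so that this borderline is covered.
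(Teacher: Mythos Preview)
Your approach is correct and genuinely different from the paper's. The paper identifies $\widetilde R_k(1-\zeta;\tau)$ explicitly, via a cited result of Gomilko--Greco--Zi\c{e}tak, with a ratio of Gauss hypergeometric functions, then rewrites numerator and denominator as shifted Jacobi polynomials $\mathcal P_{k-1}^{(1-\alpha,\alpha)}$ and $\mathcal P_k^{(\alpha-1,-\alpha)}$; the classical fact that Jacobi zeros lie in $(-1,1)$ then yields the explicit root formulae $\zeta_i = 1 + \tau\,(1-\theta_i)/(1+\theta_i) > 1$. Your route is more self-contained: you read off directly from the quadrature that $\widetilde R_k(z)=\sum_j\gamma_j(\eta_j+z)^{-1}$ with $\gamma_j>0$, $\eta_j=\tau(1-t_j)/(1+t_j)>0$, so $\widetilde q_k$ has zeros at $-\eta_j<0$, and the Cauchy-sum interlacing lemma places the zeros of $\widetilde p_{k-1}$ strictly between consecutive $-\eta_j$, hence also in $(-\infty,0)$. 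Both arguments are short; the paper's gives exact root locations (useful for further analysis), while yours avoids the external hypergeometric reference and works from the quadrature construction already in hand.

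One point in your write-up is off and worth fixing. You attribute the failure of $p_k$ to be Schur to a boundary degeneration at $\tau=1$ (``one node can degenerate, pushing a zero of $p_k$ toward $\zeta=1$''). That is not the mechanism: by (\ref{pkqk}) one has $p_k(\zeta;\tau)=(1-\zeta)\,\widetilde p_{k-1}(1-\zeta;\tau)$, so $p_k(1;\tau)=0$ for \emph{every} $\tau\in(0,1]$, and this is the (unique, simple) root on the unit circle that makes the adjoint Von Neumann rather than Schur. You actually already accounted for this factor correctly in your step (i), where you reduce to locating the zeros of $\widetilde p_{k-1}$; it is only the closing commentary that misidentifies the source. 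Likewise your step (v) should read ``zeros of $p_k$ lie in $[1,\infty)$ with a simple zero at $1$'' rather than $(1,\infty)$. With that corrected, your argument is complete.
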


\begin{proof}
From Remark~\ref{rpade}, one obtains
\begin{equation}
\widetilde{R}_{k}(1-\zeta ;\tau )=\tau ^{\alpha -1}\widetilde{R}_{k}\left(
\frac{1-\zeta }{\tau };1\right) ,  \label{eqRK}
\end{equation}%
since
\begin{equation*}
\left. \frac{d^{l}}{d\zeta ^{l}}(1-\zeta )^{\alpha -1}\right\vert _{\zeta
=1-\tau }=\left. \tau ^{\alpha -1}\frac{d^{l}}{d\zeta ^{l}}\widetilde{R}%
_{k}\left( \frac{1-\zeta }{\tau };1\right) \right\vert _{\zeta =1-\tau
},\qquad l=0,1,\ldots ,2k-1.
\end{equation*}%
In addition, using the Gauss hypergeometric functions, in \cite[Theorem~4.1]%
{GGZ} it has been proved that
\begin{equation*}
\widetilde{R}_{k}\left( \frac{1-\zeta }{\tau };1\right) =\frac{%
~_{2}F_{1}\left( 1-k,1-\alpha -k;1-2k;(\tau -1+\zeta )/\tau \right) }{%
~_{2}F_{1}\left( -k,\alpha -k;1-2k;(\tau -1+\zeta )/\tau \right) },
\end{equation*}%
or equivalently, by denoting with ${\mathcal{P}}_{r}^{(\gamma ,\beta )}$ the
Jacobi polynomial of degree $r$ and by using \cite[eq.~142, p.\,464]{pbm} and
the symmetry of such polynomials,
\begin{equation}
\widetilde{R}_{k}\left( \frac{1-\zeta }{\tau };1\right) =\tau \frac{(\tau-1+\zeta)
^{k-1}{\mathcal{P}}_{k-1}^{(1-\alpha ,\alpha )}\left( 2\tau /(\tau -1+\zeta
)-1\right) }{(\tau-1+\zeta)^{k}{\mathcal{P}}_{k}^{(\alpha -1,-\alpha )}\left( 2\tau
/(\tau -1+\zeta )-1\right) }.  \label{rtilde2}
\end{equation}%
From (\ref{pkqk}) and (\ref{rtilde})--(\ref{rtilde2}), one therefore gets
\begin{eqnarray*}
p_{k}(\zeta ;\tau ) &=&(1-\zeta )\tau ^{\alpha }(\tau-1+\zeta) ^{k-1}\mathcal{P%
}_{k-1}^{(1-\alpha ,\alpha )}\left( 2\tau /(\tau -1+\zeta )-1\right) , \\
q_{k}(\zeta ;\tau ) &=&(\tau-1+\zeta) ^{k}{\mathcal{P}}_{k}^{(\alpha -1,-\alpha
)}\left( 2\tau /(\tau -1+\zeta )-1\right) .
\end{eqnarray*}%
It follows that, if we denote with $\theta _{i}$ the $i$th root of ${%
\mathcal{P}}_{k-1}^{(1-\alpha ,\alpha )}$ then the roots of $p_{k}(\zeta
;\tau )$ are given by
\begin{equation}
\zeta _{i}=1+\tau \frac{1-\theta _{i}}{1+\theta _{i}}>1,\quad i=1,\ldots
,k-1,\qquad \zeta _{k}=1,  \label{zetap}
\end{equation}%
where the inequality follows from the fact that the roots of the Jacobi
polynomials belong to $(-1,1).$ Similarly, by denoting with $\vartheta _{i}$
the $i$th root of ${\mathcal{P}}_{k}^{(\alpha -1,-\alpha )},$ one deduces
that the roots of $q_{k}(\zeta ;\tau )$ read
\begin{equation}
\zeta _{i}=1+\tau \frac{1-\vartheta _{i}}{1+\vartheta _{i}}>1,\quad
i=1,\ldots ,k.  \label{zetaq}
\end{equation}%
From (\ref{zetap})-(\ref{zetaq}) the statement follows immediately.
\end{proof}

An important consequence of the previous result is that the finite
recurrence scheme is always $0$-stable independently of the stepnumber $k$
and $\tau \in (0,1].$ More precisely, in the case of $g \equiv 0$ the zero
solution of (\ref{poi}) is stable with respect to perturbations of the
initial values.

\subsection{Consistency}

In this section we examine the consistency of the method. While,
theoretically, it is only exact for constant solutions (see Remark~\ref{remcost}), numerically one
observes that the consistency is rather well simulated if $k$ is large
enough. The analysis will also provide some hints about the choice of the
memory length $m$. We restrict our consideration to the case $p=1$ ($m=k$)
but the generalization is immediate.

For a given $y(t)$, the FBDF of order $1$ yields the approximation
\begin{equation*}
_{0}D_{t}^{\alpha }y(t)=\frac{1}{h^{\alpha }}\sum_{j=0}^{N}(-1)^{j}%
\binom{\alpha }{j}\left(y(t-jh)-y(0)\right)+O(h),\quad t=Nh.
\end{equation*}%
Let%
\begin{equation*}
\Delta _{h}^{\alpha }y(t):=\sum_{j=0}^{N}(-1)^{j}\binom{\alpha }{j}%
\left(y(t-jh)-y(0)\right).
\end{equation*}%
Writing a rational approximation of degree $k$ to $\omega _{1}^{(\alpha
)}(\zeta )=\left( 1-\zeta \right) ^{\alpha }$ as%
\begin{equation*}
R_{k}(\zeta )=\sum_{j=0}^{\infty }\gamma _{j}\zeta ^{j},
\end{equation*}%
the corresponding method produces an approximation of the type%
\begin{equation*}
_{0}D_{t}^{\alpha }y(t)\approx \frac{1}{h^{\alpha }}\sum_{j=0}^{N}%
\gamma _{j}\left(y(t-jh)-y(0)\right).
\end{equation*}%
Denoting by%
\begin{equation*}
R_{k,h}^{\alpha }y(t):=\sum_{j=0}^{N}\gamma _{j}\left(y(t-jh)-y(0)\right),
\end{equation*}%
we obtain%
\begin{eqnarray*}
\lefteqn{_{0}D_{t}^{\alpha }y(t) -\frac{1}{h^{\alpha }}R_{k,h}^{\alpha }y(t)=}\\
&&=  ~_{0}D_{t}^{\alpha }y(t)-\frac{1}{h^{\alpha }}\Delta _{h}^{\alpha }y(t)+%
\frac{1}{h^{\alpha }}\Delta _{h}^{\alpha }y(t)-\frac{1}{h^{\alpha }}%
R_{k,h}^{\alpha }y(t)\\
&&=O(h)+\frac{1}{h^{\alpha }}\sum\nolimits_{j=0}^{N}\left[ (-1)^{j}\binom{%
\alpha }{j}-\gamma _{j}\right]\left( y(t-jh)-y(0) \right).
\end{eqnarray*}%
The consistency of the method is ensured if
\begin{equation}
\frac{1}{h^{\alpha }}\sum\nolimits_{j=0}^{N}\left[ (-1)^{j}\binom{\alpha }{j}%
-\gamma _{j}\right] \left(y(t-jh)-y(0)\right)\rightarrow 0  \label{tm_2}
\end{equation}%
as $h\rightarrow 0$ (cf. \cite{GG}). While this cannot be true for a fixed $%
k<\infty $, in what follows we show that numerically, i.e., for $h\geq
h_{0}>0$, the consistency is well simulated if $k$ is large enough and if
the rational approximation to $A_{p}^{\alpha }$ is reliable.

As pointed out in \cite{Lub}, a certain method for FDEs with generating
function $\omega ^{(\alpha )}(\zeta )$ is consistent of order $p$ if%
\begin{equation*}
h^{-\alpha}\omega ^{(\alpha )}(e^{-h})=1+O(h^{p}).
\end{equation*}%
In this setting, in order to understand the numerical consistence of our
method, we consider the above relation by replacing $\omega ^{(\alpha
)}(e^{-h})$ with $\omega_{1}^{(\alpha )}(e^{-h})$ and $R_{k}(e^{-h})$. In
particular, if we set 
\begin{equation}
q_{k}(h)=\log _{h}\left( h^{-\alpha }\left|R_{k}(e^{-h}) -\omega _{1}^{(\alpha
)}(e^{-h})\right| \right) ,  \label{qk2}
\end{equation}%
then we obtain
\begin{eqnarray*}
h^{-\alpha }\left|R_{k}(e^{-h})\right| &\leq& h^{-\alpha }\left|\omega_1^{(\alpha)}(e^{-h})\right| +
h^{-\alpha}\left|R_{k}(e^{-h})-\omega_1^{(\alpha)}(e^{-h})\right|.\\
&=& 1+O(h) + h^{q_{k}(h)}
\end{eqnarray*}
This implies that the consistency of the FBDF of the first order  
is well simulated  as long as $q_k(h)$ is larger than $1.$
In Figure~\ref{cons1_2}, we plot such function for $\alpha =1/2,$ $\tau =1/10$, and different values of $k.$\\
\begin{figure}%tbh]
%\centerline{\includegraphics[width=10cm,height=6cm]{consis.eps}}
\centerline{\includegraphics[width=8cm,height=5cm]{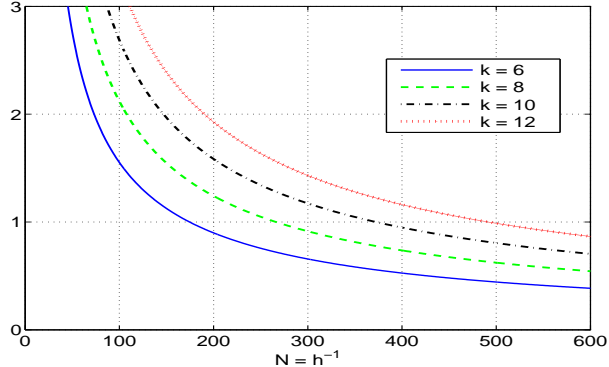}}
\caption{Plot of the function $q_{k}(h)$ in (\protect\ref{qk2}) for 
$\protect%
\alpha =1/2,$ $\protect\tau =1/10$ and different values of $k.$ }
\label{cons1_2}
\end{figure}

The previous experiment does not take care of the perturbation 
introduced in the approximation of the fractional derivative of fractional
powers of the independent variable which may be present in the solution of the 
FDE. In order to control such perturbations, we therefore consider the following second experiment. 
Going back to formula (\ref{tm_2}), we let $N=1/h$ and $y(t)=E_{\alpha }(-t^{\alpha })$
where $E_{\alpha }(x)$ denotes the one-parameter Mittag-Leffler function
(see e.g. \cite[Chapter~1]{Pod})
\begin{equation}\label{mlf}
E_{\alpha }(x)=\sum_{k=0}^{\infty }\frac{x^{k}}{\Gamma (k\alpha +1)}.
\end{equation}%
In Figure \ref{cons2_2}, we then consider the behavior of the function
\begin{equation}
\widetilde{q}_{k}(h)=\log _{h}\left( h^{-\alpha }\left\vert
\sum\nolimits_{j=0}^{N}\left( (-1)^{j}\left(
\begin{array}{c}
\alpha  \\
j%
\end{array}%
\right) -\gamma _{j,k}\right) (y(t_{N-j})-y(0))\right\vert \right)
\label{qkk_2}
\end{equation}
which, similarly to $q_k(h),$ has to be compared with $1.$
The values of $y(t)$ have been
computed using the \texttt{Matlab} function \texttt{mlf} from \cite{Pod2}
that implements the Mittag-Leffler function together with the Schur-Parlett
algorithm.\\
\begin{figure}%[tbh]
%\centerline{\includegraphics[width=10cm,height=6cm]{consis2.eps}}
\centerline{\includegraphics[width=8cm,height=5cm]{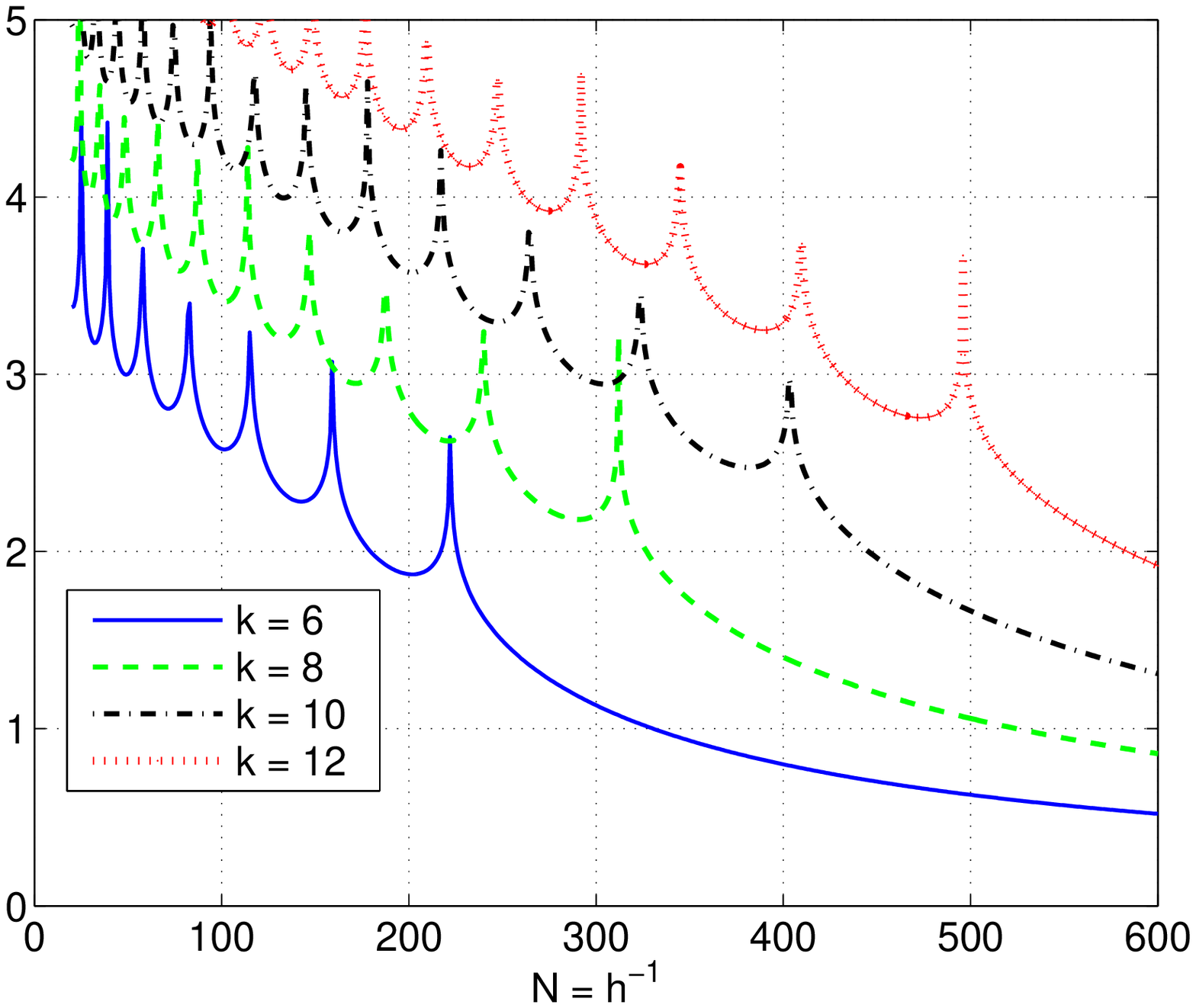}}
\caption{Plot of the function $\widetilde{q}_{k}(h)$ in (\protect\ref{qkk_2})
for $\protect\alpha =1/2,$ $\protect\tau =1/10$ and different values of $k.$}
\label{cons2_2}
\end{figure}
We conclude this section by considering what happens with the general
assumption $\left\vert y(t)\right\vert \leq M$. Using this bound, 
by (\ref{tm_2}) we consider the function
\begin{equation}
\overline{q}_{k}(h)=\log _{h}\left( h^{-\alpha
}\sum\nolimits_{j=0}^{N}\left\vert (-1)^{j}\binom{\alpha }{j}-\gamma
_{j,k}\right\vert \right) ,  \label{qkkk}
\end{equation}%
whose behavior is reported in Figure \ref{cons3_2}.\\
%, where the polynomial approach is also considered.

As already mentioned, the numerical analysis reported in this section can
also be used to select a proper value for $k$ for a fixed time stepping $h$\
or viceversa. Figures \ref{cons1_2} and \ref{cons3_2} are in fact independent of
the problem and can  be used easily to this aim.

\begin{center}
\begin{figure}%[tbh]
\centerline{\includegraphics[width=8cm,height=5cm]{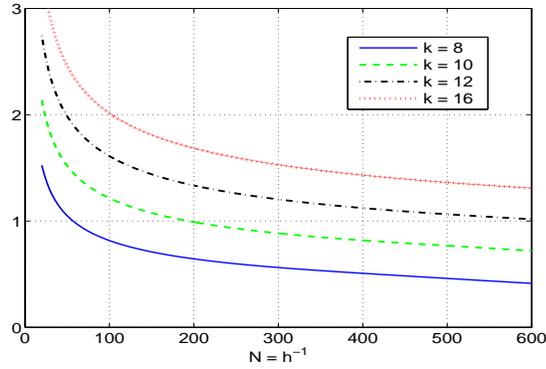}} \vspace{-0.4cm}
\caption{Plot of the function $\bar{q}_{k}(h)$ in (\protect\ref{qkkk}),
for $\protect\alpha =1/2,$ $\protect\tau =1/10$ and different values of $k.$}
% Short memory refers to the polynomial method based on the truncated Taylor
% series.}
\label{cons3_2}
\end{figure}
\end{center}

\subsection{Linear stability}

For what concerns the linear stability, taking $g(t,y(t))=\lambda y(t)$ in (%
\ref{fde}), we have that $y(t)=E_{\alpha }(\lambda t^{\alpha })\rightarrow 0$
for%
\begin{equation*}
\left\vert \arg (\lambda )-\pi \right\vert <\left( 1-\frac{\alpha }{2}%
\right) \pi ,
\end{equation*}%
(see (\ref{mlf}) and \cite{Lub2}). The absolute stability region of a FBDF is given by the
complement of $\left\{ \omega _{p}^{(\alpha )}(\zeta ):\left\vert \zeta
\right\vert \leq 1\right\} $ % \begin{equation*}
% \mathbb{C}\backslash \left\{ \omega _{p}^{(\alpha )}(\zeta ):\left\vert
% \zeta \right\vert \leq 1\right\} ,
% \end{equation*}
so that a good approximation of the generating function should lead to
similar stability domains and hence good stability properties. We consider
the behavior of methods based on the Gauss-Jacobi rule whose corresponding
stability regions are given by, see (\ref{pm})-(\ref{qm}),
\begin{equation*}
\mathbb{C}\backslash \left\{ \frac{p_{m}(\zeta )}{q_{m}(\zeta )}:\left\vert
\zeta \right\vert \leq 1\right\} .
\end{equation*}%
From a theoretical point of view, from Theorem~\ref{tipipc} one deduces that
for $p=1$ such regions are always unbounded for each $m=k$ and $\tau \in
(0,1].$ Indeed, as shown in Figure \ref{fig11}, the methods simulate the
behavior of the FBDFs rapidly, i.e. already for $k$ and therefore $m$ small.
In particular, the stability domain of the method of degree $k=m=12$ in the
left frame of Figure \ref{fig11} is very close to the one of the FBDF of
order $1.$
\begin{figure}[tbh]
\centerline{\includegraphics[width=12cm,height=6cm]{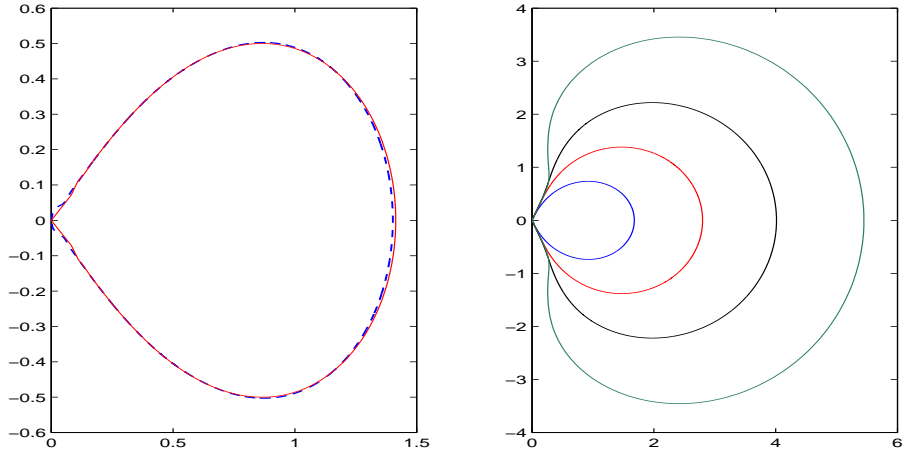}} \vspace{%
-0.4cm}
\caption{Left: boundary of the stability domains of the methods based on the
Gauss-Jacobi rule with $k=6$ (dashed line) and $k=12$ (solid line) for $p=1,$
$\protect\tau =1/10,$ and $\protect\alpha =1/2$. Right: boundary of the
stability domains of the same methods of degree $k=8$ for $p=1$ (inner) to $%
4 $ (outer), $\protect\tau =1/10$ and $\protect\alpha =3/4$.}
\label{fig11}
\end{figure}

\section{Numerical examples}

\label{sec6}

As first example, we consider the one-dimensional Nigmatullin's type
equation
\begin{eqnarray*}
_{0}D_{t}^{\alpha }u(x,t) &=&\frac{\partial ^{2}u(x,t)}{\partial x^{2}}%
,\quad t>0,\quad x\in \left( 0,\pi \right) , \\
u(0,t) &=&u(\pi ,t)=0, \\
u(x,0) &=&\sin x.
\end{eqnarray*}%
If we discretize the spatial derivative by applying the classical central
differences on a uniform mesh of meshsize $\delta =\pi /(s+1),$ we obtain
the $s$-dimensional FDE
\begin{equation}
_{0}D_{t}^{\alpha }y(t)=Ly(t),\quad y(0)=y_{0},  \label{df}
\end{equation}%
where $L=\delta ^{-2}\cdot \mbox{tridiag}(1,-2,1)$, and $y_{0}$ is the sine
function evaluated at the interior grid points. It is known that $y_{0}$ is
the eigenvector of $L$ corresponding to its largest eigenvalue $\lambda
=-4\sin ^{2}(\delta /2)/\delta ^{2}.$ This implies that the exact solution
of (\ref{df}) is given by, see (\ref{mlf}),
\begin{equation*}
y(t)=E_{\alpha }(t^{\alpha }\lambda )y_{0}.
\end{equation*}%
% where $E_{\alpha }(x)$ denotes the one-parameter Mittag-Leffler function
% (see e.g. \cite{Pod} Chapter 1)
% \begin{equation*}
% E_{\alpha }(x)=\sum_{k=0}^{\infty }\frac{x^{k}}{\Gamma (k\alpha +1)}.
% \end{equation*}%
In Figure \ref{nigm} some results are reported. We compare the maximum norm
of the error at each step of the FBDF of order 1 (FBDF1) and the method
based on the Gauss-Jacobi rule for some values of $k$ and $\alpha .$ The
initial values for the $k$-step schemes are defined according to the
strategy described in Section~\ref{sec5}. The reference solutions have been
computed using the already mentioned \texttt{Matlab} function \texttt{mlf} from \cite{Pod2}.
The dimension of the problem is $s=50$, and we consider a uniform
time step $h=1/N$ with $N=250$ so that $h\approx \delta ^{2}$. As one can
see, if we set $\tau =1$, i.e. if we use the classical Pad\`{e}
approximation of $(1-\zeta )^{\alpha -1}$ (see Remark~\ref{rempade}), the $k$%
-step methods simulate quite well the FBDF1 initially and an improvement of
the results can be obtained by slightly increasing (considering to the total
number of integration steps) the stepnumber $k.$ A noticeable improvement
can be obtained by choosing a different value of $\tau .$ In particular, if
we set $\tau ={\hat{\tau}}=4k/N$ (see (\ref{tauhat})) then the $6$-step
method provides a numerical solution with the same accuracy of the one
provided by the FBDF1 over the entire integration interval.\newline

\begin{figure}[tbh]
\centerline{\includegraphics[width=12cm]{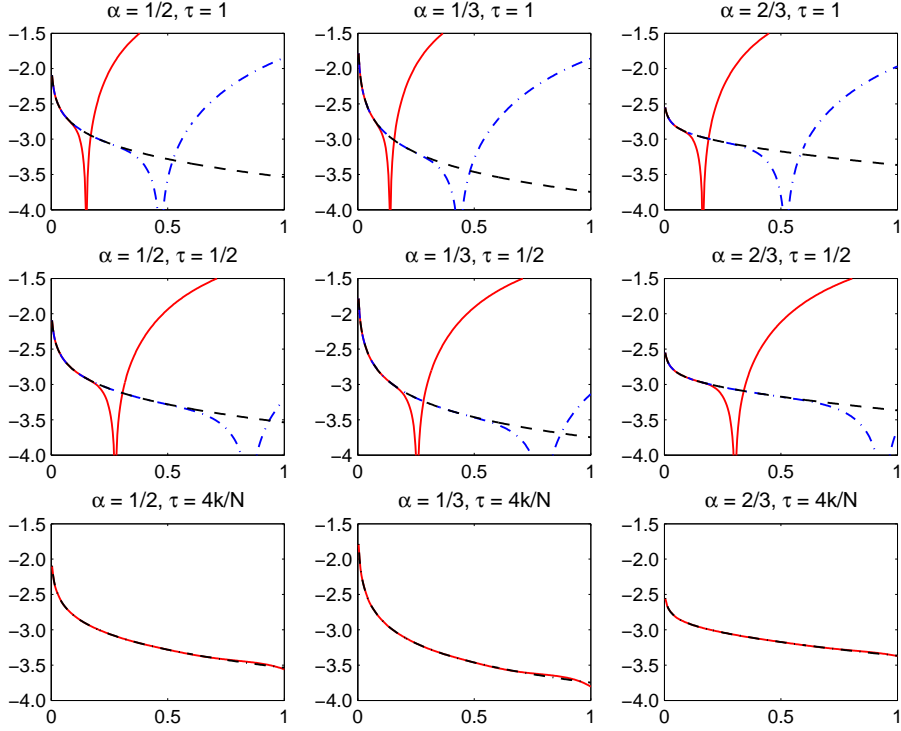}}
\caption{Step by step error (in logarithmic scale) for the solution of (%
\protect\ref{df}) for the FBDF of order 1 (dashed line) and the method based
on the Gauss-Jacobi rule with $k=6$ (solid line) and $k=12$ (dash-dotted
line).}
\label{nigm}
\end{figure}

As second example we consider the following nonlinear problem
\begin{eqnarray*}
_{0}D_{t}^{\alpha }u(x,t) &=&\frac{\partial (p(x)u(x,t))}{\partial x}%
+K_{\alpha }\frac{\partial ^{2}u(x,t)}{\partial x^{2}}+ru(x,t)\left( 1-\frac{%
u(x,t)}{K}\right) , \\
u(0,t) &=&u(5,t)=0,\quad t\in \lbrack 0,1], \\
u(x,0) &=&x^{2}(5-x)^{2},\quad x\in \lbrack 0,5].
\end{eqnarray*}%
This is a particular instance of the time fractional Fokker-Planck equation
with a nonlinear source term \cite{YLT}. In population biology, its solution
$u(x,t)$ represents the population density at location $x$ and time $t$ and
the nonlinear source term in the equation is known as Fisher's growth term.

The application of the classical second order semi-discretization in space
with stepsize $\delta =5/(s+1)$ leads to the following initial value problem
\begin{equation}
_{0}D_{t}^{\alpha }y(t)=Jy(t)+g(y(t)),\quad t\in (0,1],\qquad y(0)=y_{0},
\label{fpsemi}
\end{equation}%
where, for each $i=1,\ldots ,s,$ $(y(t))_{i}\equiv y_{i}(t)\approx u(i\delta
,t),$ $y_{i}(0)=u(i\delta ,0),$ $(g(y))_{i}=ry_{i}(1-y_{i}/K),$ and $J$ is a
tridiagonal matrix whose significant entries are
\begin{eqnarray*}
J_{ii} &=&p^{\prime }(x_{i})-\frac{2K_{\alpha }}{\delta ^{2}},\qquad
i=1,\ldots ,s, \\
J_{i,i-1} &=&-\frac{p(x_{i})}{2\delta }+\frac{K_{\alpha }}{\delta ^{2}}%
,\quad J_{i-1,i}=\frac{p(x_{i-1})}{2\delta }+\frac{K_{\alpha }}{\delta ^{2}}%
,\qquad i=1,\ldots ,s-1.
\end{eqnarray*}%
In our experiment, we set $\alpha =0.8,$ $p(x)=-1,$ $r=0.2,$ $K_{\alpha
}=K=1 $ (see \cite[Example 5.4]{YLT}) and $s=90.$ We solved (\ref{fpsemi})
over a uniform meshgrid with stepsize $h=1/256$ by using the FBDF1 and the $%
6 $-step method with $\tau =24/256.$ The so-obtained numerical solutions
have the same qualitative behavior as shown in Figure~\ref{fp_istant} for
different times $t=1/8,1/2,1.$ This is confirmed by the step by step maximum
norm of the difference between them reported in Figure~\ref{fp_norma}.

\begin{figure}[tbh]
\centerline{\includegraphics[width=12cm,height=6cm]{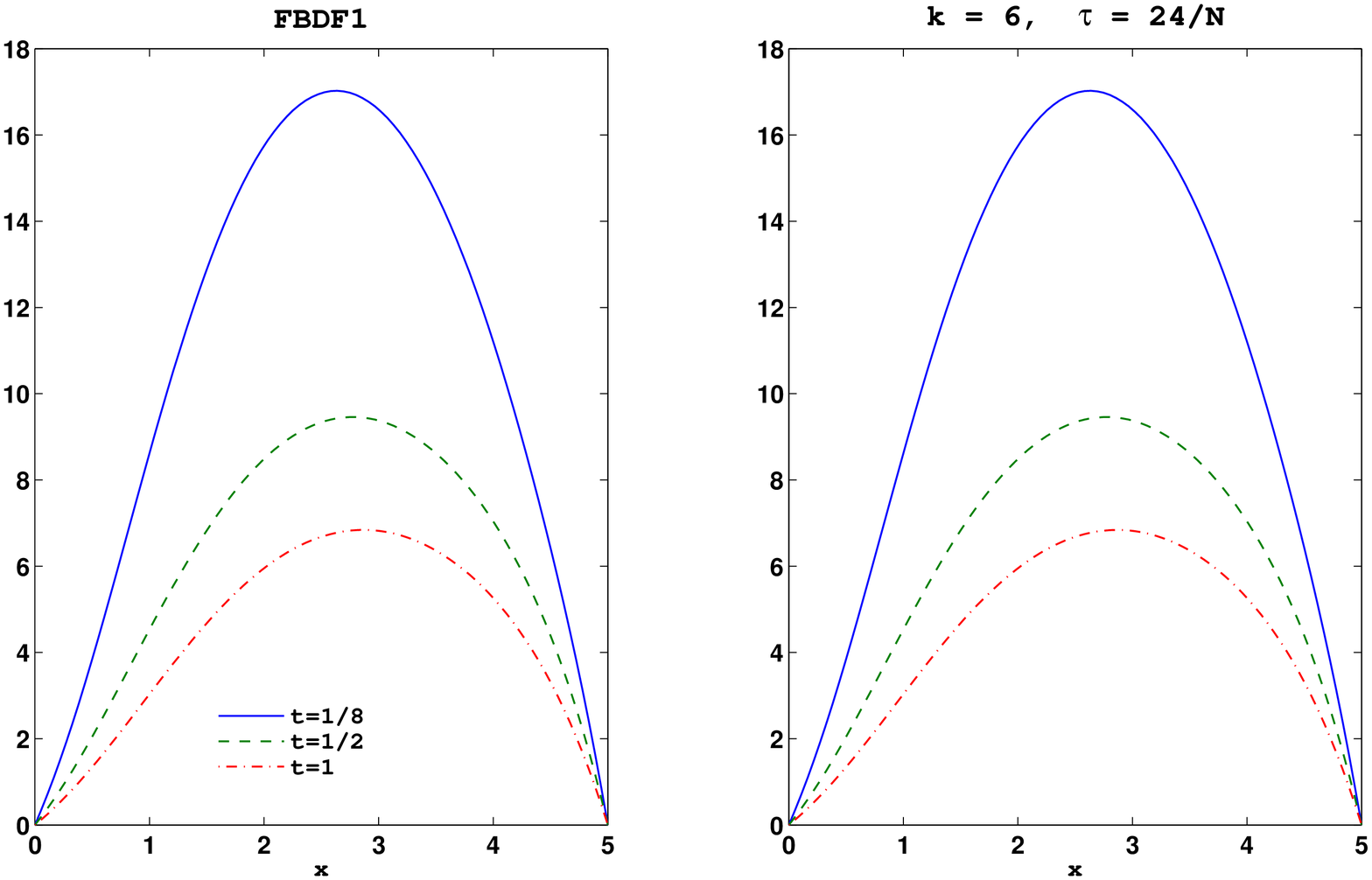}}
\caption{Numerical solution of (\protect\ref{fpsemi}) with $\protect\alpha %
=0.8$ provided by the FBDF1 and the method based on the Gauss-Jacobi rule
with $k=6$ at $t=1/8,1/2,1.$}
\label{fp_istant}
\end{figure}

\begin{figure}[htb]
\centerline{\includegraphics[width=7cm,height=5cm]{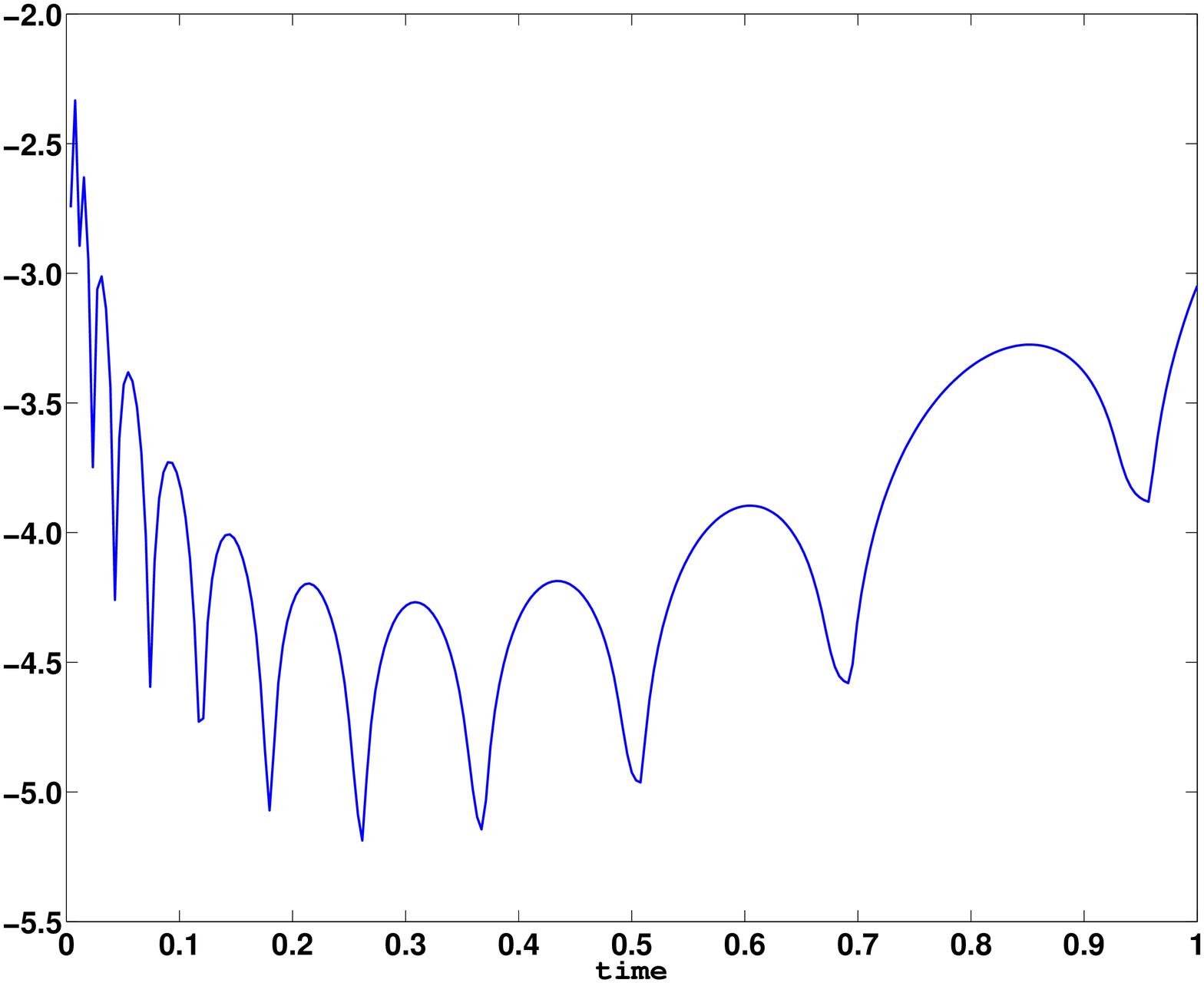}}
\caption{Step by step norm of the difference (in logarithmic scale) between
the numerical solutions provided by the FBDF1 and the $6$-step methods.}
\label{fp_norma}
\end{figure}

\section{Conclusion}

In this paper we have presented a new approach for the construction of $m$%
-step formulas for the solution of FDEs. The method shows encouraging
results in the discrete approximation of the FDE solution especially if we
consider the computational saving with respect to the attainable accuracy.
Indeed good results are attainable with short memory length. Theoretically
the method is $0$-stable and the consistency is well simulated. The linear
stability is preserved.

We finally remark that even if the paper only deals with the approximation
of FBDFs, the ideas can easily be extended to other approaches such as the
Fractional Adams type methods. It is just necessary to detect the generating
function or the corresponding Toeplitz matrix and then apply the technique
presented in the paper.

\section*{Acknowledgements}
This work was supported by the GNCS-INdAM 2014 project ``Metodi numerici per
modelli di propagazione di onde elettromagnetiche in tessuti biologici''.

\end{document}